\tikzstyle{vertex}=[circle, draw, inner sep=0pt, minimum size=3pt]
\newcommand{\vertex}{\node[vertex]}
\newcommand{\comment}[1]{}
\newtheorem{theorem}{Theorem}%[section]
\newtheorem{corollary}[theorem]{Corollary}
\newtheorem{lemma}[theorem]{Lemma}
\newtheorem{proposition}[theorem]{Proposition}
\newtheorem{observation}[theorem]{Observation}
\newtheorem{conjecture}[theorem]{Conjecture}
\newtheorem{problem}[theorem]{Problem}
\newtheorem{claim}[theorem]{Claim}
\newtheorem{remark}[theorem]{Remark}
\newtheorem{preexample}{{\bf Example}}
\newtheorem{preproof}{{\bf Proof}}
\newenvironment{proof}[1]{\begin{preproof}{\rm
              #1}\hfill{$\blacksquare$}}{\end{preproof}}
\newtheorem{presproof}{{\bf Sketch of Proof}}
\def\S{\mathcal{S}}
\DeclareMathOperator\chain{\operatorname{chain}}
\DeclareMathOperator\prev{\operatorname{prev}}
\DeclareMathOperator\nex{\operatorname{next}}
\journal{Journal}
\begin{document}

\begin{frontmatter}

%% Title, authors and addresses

%% use the tnoteref command within \title for footnotes;
%% use the tnotetext command for the associated footnote;
%% use the fnref command within \author or \address for footnotes;
%% use the fntext command for the associated footnote;
%% use the corref command within \author for corresponding author footnotes;
%% use the cortext command for the associated footnote;
%% use the ead command for the email address,
%% and the form \ead[url] for the home page:
%%
%% \title{Title\tnoteref{label1}}
%% \tnotetext[label1]{}
%% \author{Name\corref{cor1}\fnref{label2}}
%% \ead{email address}
%% \ead[url]{home page}
%% \fntext[label2]{}
%% \cortext[cor1]{}
%% \address{Address\fnref{label3}}
%% \fntext[label3]{}

\title{Maximum Nullity and Forcing Number on Graphs\\ with Maximum Degree at most Three}
\author{Meysam Alishahi \corref{cor1}}\cortext[cor1]{Corresponding author.}
\ead{meysam$\_$alishahi@shahroodut.ac.ir}
\author{Elahe Rezaei Sani} \ead{e.rezaeisani24@gmail.com}
\author{Elahe Sharifi} \ead{e.sharifi1988@gmail.com}
\address{Faculty of Mathematical Sciences, Shahrood University of Technology, Shahrood, Iran.}
\begin{abstract}
A dynamic coloring of the vertices of a graph $G$ starts with an initial subset $F$ of colored vertices, with all remaining vertices being non-colored. At each time step, a colored vertex with exactly one non-colored neighbor forces this non-colored neighbor to be colored. The initial set $F$ is called a forcing set of $G$ if, by iteratively applying the forcing process, every vertex in $G$ becomes colored.
The forcing number of a graph $G$, denoted by $F(G)$, is the cardinality of a minimum forcing set of $G$.   
The maximum nullity of $G$, denoted by $M(G)$, is defined to be the largest possible nullity over all real symmetric matrices $A$ whose $a_{ij} \neq 0$ for $i \neq j$, whenever two vertices $u_{i}$ and $u_{j}$
of $G$ are adjacent.  
In this paper, we characterize all graphs $G$ of order $n$, maximum degree at most three, and $F(G)=3$. 
Also we classify these graphs with their maximum nullity. 
\end{abstract}
\begin{keyword} {Forcing set; Forcing number; Maximum nullity}

%% keywords here, in the form: keyword \sep keyword

%% MSC codes here, in the form: \MSC code \sep code
%% or \MSC[2008] code \sep code (2000 is the default)

\end{keyword}

\end{frontmatter}
%%%%%%%%%%%%%%%%%%%%%%%%%%%%%%%%%%%%%%%%%%%%%%%%%%%%%%%%%%%%%%%%%%%%%%%%%%%%%%%%%%%%%%%%%
\section{Introduction}
Following the notation of \cite{dhmp,bbfhhsdhz}, for a graph $G$, we define the forcing process as follows:
Let $F\subseteq V (G)$ be an initial set of vertices. 
At  step zero, each vertex in $F$ is colored and any other vertex is non-colored. 
For $i\geq 1$, at step $i$, each colored vertex $v$ with exactly one non-colored neighbor forces its non-colored 
neighbor to be colored. At each step $i$, the set of newly get colored vertices is denoted by $V_i$ that in particular $V_0=F$. 
This process will continiue until there is no more possible change. At the end of the process, the 
set of colored vertices is called the \textit{derived set}. 
A \textit{forcing set} (zero forcing set) of $G$ is a set $F\subseteq V(G)$ of initially colored vertices if the corresponding derived 
set is equal to $V(G)$, i.e., $V(G)=\bigcup V_i$. For each vertex $v\in V(G)$, the step that $v$ gets colored is denoted by $k_F(v)$. 
Note that  $k_F(v)=i$ if and only if $v\in V_i$. 
The \textit{forcing number} of a graph $G$, originally known as the zero forcing number, denoted by $F(G)$, is 
the minimum possible cardinality for a forcing set of $G$ and any forcing set of $G$ of cardinality $F(G)$ is named an 
$F(G)$-set. It is simple to see that $F(G)=|V(G)|$ if and only if $G$ is a trivial graph i.e., a graph with no edge.   

Let $F$ be an $F(G)$-set. In the forcing process with initial vertex set $F$, 
any vertex forces at most one vertex  
while a vertex can be forced by different vertices.  Therefore, $V(G)$ can be partitioned to $|F|$ ordered 
sequences (chains) $R_1,\ldots,R_{|F|}$.
For each of these sequence, say $R_i=(v_{0},v_{1},\ldots,v_{k})$ where $i \in [|F|]$ , 
$v_0\in F$ and $v_j$ is forced by $v_{j-1}$ for each $j\in [k]$.  The set $\{R_1,\ldots,R_{|F|}\}$ is called a chain set with respect to $F$ for $G$, 
which is not  necessarily unique.  
For each $j\in\{0,\ldots,k-1\}$, we define $\nex_{R_i}(v_j)=v_{j+1}$ and 
for each $j\in[k]$, we set $\prev_{R_i}(v_j)=v_{j-1}$. 
Each sequence $R_i=(v_{0},v_{1},\ldots,v_{k})$ induces a path $P_i=v_{0} v_{1} \ldots v_{k}$ in $G$, i.e, $G[V(R_i)]$ is an induced path. 
To see that, note that if there exist two adjacent vertices $v_i,v_j \in V(R_i)$, where $j> i+1$, then when $v_i$ forces $v_{i+1}$, its neighbor  $v_j$ 
is non-colored neighbor which is impossible. A chain is called {\it trivial} if it contains only one vertex.

For a graph $G$, a \textit{total forcing set} is a forcing set of $G$ inducing a subgraph with no isolated vertex. 
For simplicity, we write TF-set instead of total forcing set.  
The \textit{total forcing number} of $G$, denoted by $F_{t}(G)$, is the cardinality of a minimum $TF$-set in $G$. Total forcing set was first introduced and studied by Davila~\cite{db} as a strengthening of forcing set which was originally introduced by  the AIM-minimum rank group~\cite{aim}. It is observed (for instance, see~\cite{db,dho}) that for a graph $G$ with no isolated vertex,   
\begin{equation}\label{ft}
F(G) \leq F_{t}(G) \leq 2F(G). 
\end{equation}

The forcing process and the forcing number were introduced in \cite{bg} and \cite{aim} to bound the minimum rank of a graph and hence its maximum nullity.
Since then, the forcing number has gained a considerable attention in graph theory and has been related to many graph theoretic parameters. In general, study of forcing number is challenging for many reasons. First, it is difficult to compute, as it is known to be NP-hard~\cite{d,td}. Further, many of the known bounds leave a wide gap for graphs in general. For example, the forcing number of a graph of order $n$ can be as low as $\delta(G)$ (see~\cite{bbfhhsdh}), and as high as $n\Delta(G)/(\Delta(G)+1)$ (see~\cite{acdp}).

Gentner and Rautenbach~\cite{grs} proved that for a graph $G$ with maximum degree at most $3$ and the
girth at least $5$, we have 
\begin{equation}
\label{eq:f}
\centering
F(G) \leq (\dfrac{n}{2})-\dfrac{n}{24 \log _{2}{n}+6}+2, 
\end{equation}
moreover, they introduced two graph $G_1$ and $G_2$ and proved $F(G) \leq n(\Delta -2)/(\Delta -1)$ for any connected graphs $G\not\in \{ K_{\Delta +1},K_{\Delta ,\Delta},K_{\Delta -1 ,\Delta},G_{1},G_{2}\}$. For more details and the definition of $G_1$ and $G_2$, see~\cite{grs}. 
They also conjectured that $F(G) \leq {n\over3}+2$ for any graph $G$ with $n$ vertices and $\Delta =3$. In~\cite{gms}, an infinite family of graphs $\lbrace G_{n} \rbrace $ with maximum degree $3$ was introduced such that the forcing number of $G_{n}$ is at least ${4\over 9}\vert V(G_{n}) \vert $, a counter example to the  Gentner-Rautenbach conjecture. Note that, at this point, the best upper bound for the forcing number of connected graphs with maximum degree three~\cite{acdp} is 
\begin{equation}
\label{eq:f1}
\centering
  F(G) \leq \frac{n}{2}+1.
\end{equation}
The equality can be achieved for graphs $K_{4} $ and $ K_{3,3} $. Davila and Henning~\cite{dhtz} studied forcing sets 
and total forcing sets in claw-free cubic graphs. They proved $ F(G)<n/2$, where $G$ is a connected, claw-free cubic graph with $n\geq10$ vertices.  
Akbari and Vatandoost~\cite{avp} gave a partial answer to the question
of determining all graphs $G$ with $M(G) = F(G)$ posed by AIM Minimum Rank-Special Graphs Work Group~\cite{aim}, 
where $M(G)$ is the maximum nullity of $G$ (for the definition, see Section~\ref{Sec1.1}).  
Indeed, they characterized all cubic graphs with forcing number $3$ and, as a corollary, 
they also showed that $M(G) = 3$ for any graph $G$ in this family. 
\begin{problem} {\rm\cite{aim}}\label{q1}  
 Determine all graphs $G$ for which $M(G) = F(G)$.
\end{problem}

In this paper, we characterize all graphs $G$ with maximum degree at most three and forcing number $3$. 
We will moreover investigate the maximum nullity of these graphs comparing to their forcing numbers. 
In this regard, we give a partial answer to Problem~\ref{q1} so that the family provided by Akbari and Vatandoost~\cite{avp} is included. 
Then we introduce a family of graphs with maximum degree at most three containing graphs $ G$ with $ M(G)=F(G)=k \leq 3 $ 
and we determine an upper bound for total forcing number of this family of graphs (see theorem~\ref{fmk}).

\subsection{ Definition and Notation}\label{Sec1.1}
For notation and terminology not presented here, we refer readers to~\cite{hhs}. 
We will use the notation $P_n$ and $C_n$ to denote the \textit{path} and the \textit{complete graph} on $n$ vertices, 
respectively. Also we use the standard notation $[k]=\{1,2,\ldots,k\}$. 
Let $G$ be a graph. For a path $P=x_0\cdots x_m$ in $G$,  
we define $x_iP=x_i\cdots x_m$ and
$Px_i=x_0\cdots x_i$ for each $i\in[m]$.

%Let $ S \subseteq V(G) $; then the subgraph $ H $ of $ G $ induced by $ S $ is denoted by $ H= G[S] $.
%The Let $P_n$ the path and the complete graph on $n$ vertices, respectively. 

Let $S_{n}(\mathbb{R})$ be the set of all symmetric matrices of order n over the real numbers. For 
$A=\big(a_{ij}\big) \in S_{n}(\mathbb{R})$, the graph of $A$, denoted by $\mathcal{G}(A)$, is a graph with vertex 
set $\lbrace v_{1},\ldots,v_{n} \rbrace$ and edge set $ \left\{v_{i}v_{j} : a_{ij} \neq 0, 1 \leq i < j \leq n \right\}$. It should be noted that the diagonal of $A$ has no
role in the definition of $\mathcal{G}(A)$.
The set of symmetric matrices of graph $G$ is $S(G) =\{A \in S_{n}(\mathbb{R}) : \mathcal{G}(A) = G\}$. The \textit{minimum
rank of a graph $G$}, denoted by $mr(G)$, is the minimum possible rank for a matrix in $S(G)$ and,  similarly, the \textit{maximum nullity} of $G$, denoted by $M(G)$, is the maximum nullity of symmetric matrices in $S(G)$. Clearly, $mr(G) + M(G) = n$ and $M(G)\geq 1$ for any graph $G$.

\subsection{Graphs of $ k $-parallel paths}
Johnson et. al.~\cite{jls} defined the graph of $2$- parallel paths. A graph $G$, which is not a path, is said to be a 
\textit{graph of $2$-parallel paths} if there are two vertex disjoint induced paths covering all the vertices and $G$ 
can be drawn in the plane so that these two paths are parallel horizontal lines if we forgot their vertices, and 
moreover the edges (drawn as segments, not curves) with ends in different paths do not cross.  
Particularly, union of two disjoint paths is a graph of $2$-parallel paths.

As a generalization, in a natural way, we can define the \textit{graph of $k$-parallel paths} for any 
integer $k\geq 1$ as follows:  %These graphs for $k=3$ will be used in the characterization of the main theorems.
Simply, a $1$-parallel path is just a path.
For an integer $k\geq 2$, a graph $G$, which is not a graph of $(k-1)$-parallel paths, is said to be a 
\textit{graph of $k$-parallel paths},  if there exist $k$ vertex disjoint induced 
paths covering all the vertices and $G$ can be drawn in the plane in a way that these paths are parallel horizontal lines if we forgot their vertices, and moreover 
the edges (drawn as segments, not curves) whose ends are in different paths do not cross each other. 
Such a drawing is called a {\it standard drawing} of $G$, for example see Figure~\ref{K5}. Note that a graph $G$ may have several standard drawings.
Here after, for a fixed standard drawing of a graph of $k$-parallel paths, the $k$ paths used in the definition are called the {\it parallel paths} 
with respect to this drawing and any edge with end-points in different paths is called a {\it segment}. 

%In other words, a graph $G$ is a $k$-parallel paths if $G$ is not a $(k-1)$-parallel paths and 
%$V(G)$ can be partitioned into non-empty subsets $V_{1},\ldots,V_{k}$ such that the induced 
%subgraphs $P^{i} = G[V_{i}]$, for $i \in [k]$, are paths, furthermore, the graph $G$ can be drawn 
%in the plane in such a way that $P^{1},\ldots,P^{k}$ are parallel horizontal lines, and the edges 
%between $P^{1},\ldots,P^{k}$ (drawn as {\blue straight}  line segments) do not cross; such a drawing is called a \textit{standard drawing}. 
%{\red Note that different edges (line segments) may have different lengths in this drawing. }
%The paths $P^{1},\ldots,P^{k}$ are called the parallel paths (respect to this representation of $G$ 
%as a graph of $k$ parallel paths). {\blue Note that a graph $G$ may have several standard drawings.}  

It is clear that any graph of $k$-parallel paths has at least $k$ vertices. 
Also, It is known that every planar graph can be drawn in the plane in such a way that its edges are segment intersecting only at their endpoints, see~\cite{f}. Consequently, every planar graph of order $n$ is a graph of 
$k$-parallel paths for some $k\leq n$. 
Clearly, a non-planar graphs of order $n$ is not a graph of $n$-parallel paths. 
%For example, $ K_{5} $ is a non-planar graph of four parallel paths (See Fig. \ref{K5}). There are some graphs $G$ for which there is no $ k \in \left[  n \right]  $, such that $ G $ is a graph of $ k $ parallel paths. 
However, the following assertion indicates that the complete graphs of order $n\geq6$ are~not graphs of $k$-parallel paths for each $k \in \left[  n \right]$.
%%%%%%%%%%%%%%%%%%%%%%%%%%%%%%%%%%%%%%%%%%%%%%%%%%%%
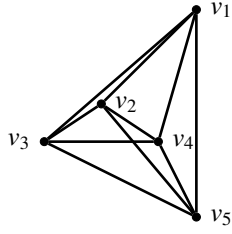
\begin{figure}
\centering
\begin{tikzpicture}
\vertex[fill] (a1) at (0,0) [label=left:$v_{3}$]{};
\vertex[fill] (a2) at (1.5,0) [label=right:$v_{4}$]{};
\vertex[fill] (a3) at (2,1.75) [label=right:$v_{1}$]{};
\vertex[fill] (a4) at (0.75,0.5) [label=right:$v_{2}$]{};
\vertex[fill] (a5) at (2,-1) [label=right:$v_{5}$]{};
\path [line width=1pt]
(a1) edge (a2)
(a1) edge (a3)
(a1) edge (a4)
(a1) edge (a5)
(a2) edge (a3)
(a2) edge (a4)
(a2) edge (a5)
(a3) edge (a4)
(a3) edge (a5)
(a4) edge (a5)
;
\end{tikzpicture}
\caption{\small A standard drawing of $ K_{5} $ as a graph of $4$-parallel paths with parallel paths $ P^{1}:v_{1}, P^{2}:v_{2}, P^{3}:v_{3}v_{4}, P^{4}:v_{5}$. }\label{K5}
\end{figure}
%%%%%%%%%%%%%%%%%%%%%%%%%%%%%%%%%%%%%%%%%%%%%%%%%%%%

\begin{observation}\label{two}
For $n \geq 6 $,  $K_n$ is~not a graph of $k$-parallel paths for each $k \in [n]$.
\end{observation}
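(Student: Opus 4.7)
The plan is a proof by contradiction: assume that $K_n$, with $n\ge 6$, has a standard drawing as a graph of $k$-parallel paths for some $k\in[n]$, and derive an impossibility.

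The argument begins by exploiting completeness of $K_n$. Since any three vertices induce a triangle, no set of three or more vertices induces a path; hence every parallel path contains at most two vertices. I next establish that at most one parallel path can have exactly two vertices. For two hypothetical size-two paths $P^i = a_ib_i$ and $P^j = a_jb_j$ placed on different horizontal lines, the four $K_n$-edges between their vertex sets form a $K_{2,2}$ which must be drawn by straight segments between those two parallel lines. The standard interleaving criterion for straight segments between two parallel lines then forces, depending on the left-right order of $a_j$ and $b_j$, either the pair $\{a_ib_j,\,b_ia_j\}$ or the pair $\{a_ia_j,\,b_ib_j\}$ to cross, contradicting the non-crossing requirement on segments in a standard drawing.

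Combined, these two steps yield $k\ge n-1$, leaving only two cases. If $k=n$, every parallel path is a singleton and every edge of $K_n$ is a segment; requiring the segments to be pairwise non-crossing straight lines then produces a planar drawing of $K_n$, contradicting the non-planarity of $K_n$ for $n\ge 5$. If $k=n-1$, exactly one parallel path $ab$ has two vertices, so the segments realize $K_n\setminus\{ab\}$. Since $n-2\ge 4$, I can choose four vertices of $V(K_n)\setminus\{a,b\}$ together with $a$; these five vertices induce a copy of $K_5$ inside $K_n\setminus\{ab\}$. Hence the segment graph contains $K_5$, is non-planar, and so its edges cannot be drawn with pairwise non-crossing straight segments, yielding the desired contradiction.

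The only real obstacle is the crossing lemma for two size-two parallel paths established in the second paragraph; once it is in hand, the final two cases follow immediately from the non-planarity of $K_5$.
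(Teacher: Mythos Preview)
Your proof is correct and follows essentially the same route as the paper: both argue that parallel paths have at most two vertices, that at most one can have two (via the $K_{2,2}$ crossing argument), and then derive a planarity contradiction. The only cosmetic difference is in the last step---the paper deletes one endpoint of the unique $P_2$ to obtain a planar drawing of $K_{n-1}$, whereas you locate a $K_5$ among the segments directly---but these are equivalent ways of invoking the non-planarity of $K_5$.
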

\begin{proof}{
For a contradiction, suppose that $K_n$ is a graph of $k$-parallel paths where $k\in[n]$.
Consider a standard drawing of $K_n$. 
Since $K_n$ is a complete graph and the parallel paths are induced paths, then each of its parallel paths is of order at most two. 
Furthermore, since $K_n$ is not a planar graph, we know $k\leq n-1 $. Consequently, there must be some parallel path isomorphic to $P_{2}$. 
Clearly, there is only one such a $P_{2}$ path, otherwise, the segments between two $P_2$ paths intersect each others which is~not possible. 
Therefore, there exist $n-2$ parallel paths $P^1,\ldots,P^{n-2}$ isomorphic to $P_1$ and exactly one parallel path $P^{n-1}$ isomorphic to $P_{2}$. 
Let $V$ be all the vertices in $K_n$ but a vertex from $P^{n-1}$. In view of the standard drawing of $K_n$, the induced subgraph by $V$ is a planar graph
isomorphic to $K_{n-1}$ which is~not possible since $n\geq 6$.
}\end{proof}

\section{Main results}\label{mainresults}
%In this paper, we prove the following results. 
%In Section~\ref{pt1}, as our first result, we study the forcing sets of a graph of $k$-parallel paths. In this regard, 
%we prove that the forcing number of a graph of $k$-parallel paths is at most $k$. %A proof of Theorem \ref{kparallel} is presented in Section \ref{pt1}.
%\begin{theorem}\label{kparallel}
%{\blue For any graph $G$ of $k$-parallel paths, $ F(G) \leq k $. Moreover, the left-most (resp. the right-most) vertices of any standard drawing of $G$ forms a forcing set.}
%\end{theorem}

Akbari et. al.~\cite{avp} characterized all cubic graphs with forcing number $3$ and proved that the maximum nullity of these graphs is $3$ as well.   
In the next two results, as a generalization of their result, we characterize all the graphs with maximum degree at most three and $F(G)=3$. 
\begin{theorem}\label{iffkparallel}
For a graph $ G $ with $ \Delta(G) \leq 3 $, $ F(G)=3 $ if and only if $ G $ is a graph of $3$-parallel paths.
In particular, the left-most vertices of the parallel paths in any standard drawing of $G$ form an $F(G)$-set. 
\end{theorem}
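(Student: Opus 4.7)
My plan splits along the two directions of the biconditional, with the $(\Leftarrow)$ implication also delivering the ``in particular'' claim about leftmost vertices.

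For $(\Leftarrow)$, fix a standard drawing of $G$ with parallel paths $P^{1},P^{2},P^{3}$ and take $F$ to be the leftmost vertex of each path. The hypothesis $\Delta(G)\leq 3$ is crucial: because two of every vertex's edges are used by its path predecessor and successor, every vertex has at most one segment neighbor. I plan to argue by contradiction, supposing the forcing closure of $F$ omits a nonempty set $U$. On each path $P^{i}$ meeting $U$, the leftmost uncolored vertex $w_{i}$ has colored path predecessor $w_{i}^{-}$ which cannot force, so it must have a (necessarily unique) segment to some further uncolored vertex $s_{i}$ on another path. I will then show that the resulting family of ``obstructing'' segments cannot fit in the standard drawing without a crossing, splitting by the number of paths meeting $U$. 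If only one path meets $U$, then $s_i$ lies on a fully-colored path, a contradiction. If two paths meet $U$, the two obstructing segments lie between the same pair of paths, and the inequalities forced by ``$w_i$ is leftmost uncolored on $P^i$'' flip the endpoint order on the two shared paths, producing a crossing. If all three paths meet $U$, either two of the obstructing segments share a pair of paths (reducing to the previous case) or the three segments span all three pairs in a $3$-cycle pattern. For the lower bound $F(G)\geq 3$, a graph of $3$-parallel paths is by definition not $2$-parallel, so $(\Rightarrow)$ rules out $F(G)\leq 2$.

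For $(\Rightarrow)$, start from an $F(G)$-set $F$ with chain decomposition $R_{1},R_{2},R_{3}$ into induced paths. The key combinatorial lemma is that for any two edges $u_{1}v_{1}$ and $u_{2}v_{2}$ with $u_{1},u_{2}\in V(R_{a})$ at chain positions $\ell_{1}<\ell_{2}$ and $v_{1},v_{2}\in V(R_{b})$ at positions $p_{1},p_{2}$, one must have $p_{1}<p_{2}$. Otherwise, in the forcing-dependency digraph (where each non-initial chain vertex $u_{\ell+1}$ depends both on its chain predecessor $u_{\ell}$ and on the segment neighbor of $u_{\ell}$, if any), the configuration $\ell_{1}<\ell_{2}$, $p_{1}>p_{2}$ creates the cycle
\[
v_{p_{2}+1}^{b}\to u_{\ell_{2}}^{a}\to\cdots\to u_{\ell_{1}+1}^{a}\to v_{p_{1}}^{b}\to\cdots\to v_{p_{2}+1}^{b},
\]
whose first and fourth arrows are segment dependencies and whose remaining arrows follow the two chains, contradicting the acyclicity required by a valid forcing order. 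This turns the set of segments between any two chains into an order-preserving matching; using $\Delta(G)\leq 3$ (so segments form a global matching), I then choose an appropriate row assignment of the three chains and set $x$-coordinates by chain position to produce a standard drawing. Finally, $G$ is not $2$-parallel, since otherwise $(\Leftarrow)$ would give $F(G)\leq 2$.

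I expect the main obstacle to be the three-path, three-pair ``3-cycle'' subcase of $(\Leftarrow)$: here no two obstructing segments share a pair of paths, so the two-path crossing argument does not apply directly. To resolve it I would identify which of $P^{1},P^{2},P^{3}$ is the middle path in the standard drawing, intersect the ``long'' obstructing segment (spanning the two outer paths) with the middle path's horizontal line, and translate non-crossing against each of the two ``short'' segments into linear inequalities on $x$-coordinates; combined with the inequality given by the middle path's own leftmost uncolored vertex, these force an arithmetic contradiction. A secondary technical point in $(\Rightarrow)$ is converting the same-pair non-crossing data into a full planar layout, but once the order-preserving matchings and the at-most-one-segment-per-vertex bound are in hand this is a realization step.
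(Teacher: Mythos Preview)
Your $(\Leftarrow)$ argument is sound and cleaner than the paper's: the paper proves Proposition~\ref{left} by induction on $|V(G)|+|E(G)|$, while your direct stuck-set analysis with obstructing segments works and the $3$-cycle subcase resolves exactly as you sketch (the two non-crossing constraints against the long segment force $x(s_1)<x(w_2^-)$ on the middle path, contradicting $s_1\geq w_2>w_2^-$). One small correction: your claim that ``every vertex has at most one segment neighbor'' fails at the path endpoints, so $w_i^-$ can have two segment neighbors when it is the leftmost vertex of $P^i$; this does not break the argument, since you only need \emph{some} uncolored segment neighbor, but the ``necessarily unique'' phrasing should be dropped.

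The genuine gap is in $(\Rightarrow)$. Your key lemma (segments between a fixed pair of chains are order-preserving) is exactly the paper's Lemma~\ref{cross}, and your digraph-cycle proof of it is correct. But the assertion that ``$\Delta(G)\leq 3$, so segments form a global matching'' is false: the initial and terminal vertex of each chain have only one path neighbor and can carry \emph{two} segments. This is not a cosmetic issue --- it is precisely the source of the paper's ``bad'' and ``unfavorite'' configurations (Lemmas~\ref{initialbad} and~\ref{unfavorite}), which satisfy your pairwise order-preserving condition yet obstruct a straight-line drawing with $x$-coordinates set by chain position. Concretely, an initial vertex $x\in R_1$ adjacent to $a\in R_2$ and $b\in R_3$, together with a further segment $cd$ with $a<_{R_2}c$ and $d<_{R_3}b$, forces a crossing for every row assignment once positions are fixed by chain order. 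The paper's actual work in this direction is not the pairwise lemma but Lemmas~\ref{nobadlemma} and~\ref{unless}, which \emph{modify} the chain set $\S$ to eliminate bad and unfavorite endpoints, after which Lemma~\ref{pro} carries out the drawing. Your ``realization step'' is therefore not a step but the heart of the proof, and it does not follow from the data you have assembled; you would need either to prove the analogue of Lemma~\ref{crosss} and the absence of unfavorite configurations directly (which is false for an arbitrary chain set), or to incorporate a chain-set modification procedure as the paper does.
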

As a consequence of Theorem \ref{iffkparallel}, in the following theorem, we give a partial answer to Problem~\ref{q1}:  
We provide a characterization of graphs with maximum degree at most three whose forcing number and maximum nullity are at most three.  

%A proof of Theorem \ref{fmk} is presented in Section \ref{pt3}.
%%%%%%%%%%%%%%%%%%%%%%%%%%%%%%%%%%%%%%%%%%%%%%%%%%%%
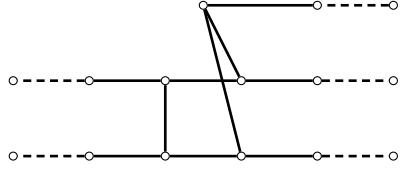
\begin{figure}
\centering
\begin{tikzpicture}
\vertex[] (a1) at (0.5,1) []{};
\vertex[] (a2) at (2,1) []{};
\vertex[] (a3) at (3,1) []{};
\vertex[] (a4) at (1,0) []{};
\vertex[] (a5) at (2,0) []{};
\vertex[] (a6) at (3,0) []{};
\vertex[] (a7) at (1,-1) []{};
\vertex[] (a8) at (2,-1) []{};
\vertex[] (a9) at (3,-1) []{};
\vertex[] (a10) at (0,0) []{};
\vertex[] (a11) at (-1,0) []{};
\vertex[] (a12) at (-2,0) []{};
\vertex[] (a13) at (0,-1) []{};
\vertex[] (a14) at (-1,-1) []{};
\vertex[] (a15) at (-2,-1) []{};
\path [line width=1pt]
(a1) edge (a2)
(a1) edge (a7)
(a1) edge (a4)
(a4) edge (a5)
(a10) edge (a4)
(a7) edge (a8)
(a7) edge (a13)
(a10) edge (a11)
(a10) edge (a13)
(a13) edge (a14)
;
\draw[densely dashed ,line width=1pt] (a5) -- (a6);
\draw[densely dashed ,line width=1pt] (a3) -- (a2);
\draw[densely dashed ,line width=1pt] (a8) -- (a9);
\draw[densely dashed ,line width=1pt] (a14) -- (a15);
\draw[densely dashed ,line width=1pt] (a11) -- (a12);
\end{tikzpicture}
\caption{\small Graphs with  $F(G)=M(G)+1=3$}\label{F8}
\end{figure}

\begin{theorem}\label{fmk} 
For a graph $G$ with $\Delta(G) \leq 3$, the following assertions hold. 
\begin{enumerate}
\item $F(G)=M(G)=1$ if and only if $G$ is a path. 
\item $F(G)=M(G)=2$ if and only if $G$ is a graph of $2$-parallel paths. 
\item $F(G)=M(G)+1=3$ if and only if $G$ is a graph of type defined in Figure~\ref{F8}. 
\item $F(G)=M(G)=3$ if and only if $G$ is a graph of $3$-parallel paths and not of type defined in Figure~\ref{F8}.
\item There is no graph with $F(G)=M(G)+1=2$ or $F(G)=M(G)+2=3$.
\end{enumerate}
\end{theorem}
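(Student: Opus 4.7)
My plan is to assemble three ingredients: the AIM inequality $M(G) \le F(G)$ which holds for all graphs; the classical fact that $F(G)=1$ iff $G$ is a connected path $P_n$ (with $M(P_n)=1$); and the analogous characterization $F(G)=2$ iff $G$ is a graph of $2$-parallel paths due to Johnson et al., together with Theorem~\ref{iffkparallel} which just handled $F(G)=3$ under $\Delta(G)\le 3$. I will also use the routine observation that $M(G)\ge 2$ whenever $G$ is not a single path; this covers every $2$- and every $3$-parallel path graph. Parts (1) and (2) then follow immediately: in (1), $F(G)=1$ forces $G$ to be a path and then $M(G)=1$; in (2), $F(G)=2$ forces $G$ to be a $2$-parallel path graph, $M(G)\le 2$ by AIM, and $M(G)\ge 2$ because $G$ is not a path.

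The substance of the theorem is the joint statement (3)+(4): among $3$-parallel path graphs with $\Delta\le 3$, the graphs depicted in Figure~\ref{F8} are exactly those whose maximum nullity drops from $3$ to $2$. By Theorem~\ref{iffkparallel} combined with $M(G)\le 3$ and $M(G)\ge 2$, we already have $M(G)\in\{2,3\}$, so the work is split into two directions. First, for every $3$-parallel path $G$ outside the Figure~\ref{F8} family, I would construct a matrix $A\in S(G)$ of nullity $3$ by placing along each parallel path $P^{1},P^{2},P^{3}$ a tridiagonal block admitting a prescribed kernel vector $x^{(i)}$ supported on $P^{i}$ with a recursively determined sign pattern, and putting at each segment a nonzero entry whose magnitude is chosen so that $x^{(i)}$ is annihilated simultaneously by the segment contributions from the other two paths. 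Whenever the configuration is \emph{not} the rigid anchor appearing in Figure~\ref{F8}, the linear system that the sign/magnitude parameters must satisfy turns out to be consistent, yielding $M(G)\ge 3$. Second, for each exceptional $G$ in Figure~\ref{F8} I would establish $M(G)\le 2$ by isolating a "core" subgraph whose minimum rank must equal its number of vertices minus $2$, and then invoking the cut-vertex/edge-subdivision additivity of the minimum-rank literature to propagate this lower bound to $\mathrm{mr}(G)\ge n-2$.

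Part (5) is then a free corollary of (1)--(4): if $F(G)\in\{2,3\}$, the arguments above force $M(G)\ge 2$, so neither $F(G)=M(G)+1=2$ nor $F(G)=M(G)+2=3$ can occur. The main obstacle I anticipate is the characterization side of (3)+(4), namely matching the precise combinatorial family in Figure~\ref{F8} with the algebraic rigidity that blocks nullity~$3$. I expect a careful case analysis on how segments can attach to the interior of a parallel path, because that is exactly where the tridiagonal-plus-segment kernel construction can collapse; once the obstructing local patterns are catalogued and seen to coincide with the core of Figure~\ref{F8}, the remainder of the argument is bookkeeping.
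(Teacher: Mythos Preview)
Your handling of parts (1), (2), and (5) matches the paper's essentially verbatim: both of you cite the path/$2$-parallel-path characterizations of $F(G)=1,2$, the bound $M(G)\le F(G)$, and the fact that $M(G)=1$ only for paths.

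For parts (3) and (4), however, your route diverges sharply from the paper's. You propose to do the matrix work yourself: build an explicit nullity-$3$ matrix for every $3$-parallel-path graph outside the Figure~\ref{F8} family, and separately prove $\mathrm{mr}(G)\ge n-2$ for the Figure~\ref{F8} graphs via cut-vertex additivity. The paper does none of this. Instead it simply invokes the Johnson--Loewy--Smith classification (Theorem~\ref{mtwo}), which already says that $M(G)=2$ if and only if $G$ is a graph of $2$-parallel paths or $G$ lies in a finite list $\mathcal{F}$ of six exceptional types. The paper's only additional observation is that, under the hypothesis $\Delta(G)\le 3$, exactly one of those six types survives, namely the family in Figure~\ref{F8}. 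With that in hand, both directions of (3) and (4) are one-line citations: a $3$-parallel-path graph has $F(G)=3$ by Theorem~\ref{iffkparallel} and $M(G)\in\{2,3\}$; it has $M(G)=2$ precisely when it lands in $\mathcal{F}$, i.e.\ precisely when it is of Figure~\ref{F8} type.

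What each approach buys: the paper's argument is a paragraph of bookkeeping once Theorem~\ref{mtwo} is on the table, whereas your plan is self-contained but amounts to re-deriving the $\Delta\le 3$ slice of the Johnson--Loewy--Smith theorem from scratch. Your tridiagonal-plus-segment kernel construction is plausible in outline, but the ``main obstacle'' you flag---matching the algebraic obstruction exactly to the combinatorial pattern of Figure~\ref{F8}---is precisely the content of the JLS classification restricted to this degree bound, so you would be doing substantially more work for the same conclusion.
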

It is worth noting that since $M(G)\leq F(G)$, Theorem~\ref{fmk} includes all possible case for $F(G)\leq3$. 

Intriguingly, one may be interested in total forcing number of $k$-parallel paths.  
In Section~ \ref{pt4}, we prove the next result asserting that the  total forcing number of a graph of $k$-parallel paths is at most twice the number of 
parallel paths which is a sharp bound. 
\begin{corollary}\label{ftn} 
For a graph $ G $ of $ k $-parallel paths and without isolated vertices, $F_{t}(G) \leq 2k$ and this bound is sharp.
\end{corollary}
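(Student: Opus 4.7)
The plan is to combine two ingredients: that the leftmost vertices of the $k$ parallel paths form a forcing set of $G$ (asserted in Theorem~\ref{iffkparallel} for $k=3$, and which I would expect to go through in the same way for arbitrary $k$), and the inequality $F_t(G)\leq 2F(G)$ from~(\ref{ft}). Fix a standard drawing of $G$ with parallel paths $P^1,\ldots,P^k$, where $P^i=v_0^iv_1^i\cdots v_{\ell_i}^i$. Since $L=\{v_0^1,\ldots,v_0^k\}$ is a forcing set of size $k$, we have $F(G)\leq k$, and then~(\ref{ft}) gives $F_t(G)\leq 2F(G)\leq 2k$.

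If one prefers an explicit witness, one can build a TF-set directly from $L$. For each $i$ with $\ell_i\geq 1$ put $S_i=\{v_0^i,v_1^i\}$, and for each $i$ with $\ell_i=0$ put $S_i=\{v_0^i,w_i\}$ where $w_i$ is any neighbor of $v_0^i$ (which exists by the no-isolated-vertex hypothesis). Then $S=\bigcup_i S_i$ satisfies $|S|\leq 2k$, contains $L$ and is therefore a forcing set, and every vertex of $S$ has a neighbor inside $S$ by construction, so $S$ is a TF-set; this is essentially the standard derivation of $F_t\leq 2F$ applied to the particular forcing set $L$.

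For sharpness, take $G=kP_2$, the disjoint union of $k$ copies of $P_2$. Stacking the $k$ edges gives a standard drawing as a graph of $k$-parallel paths (indeed it is not one of $(k-1)$-parallel paths, since each connected component must lie inside a single induced path), and $G$ has no isolated vertex. Any TF-set must contain both endpoints of each $P_2$ component (otherwise the chosen vertex is isolated in the induced subgraph), so $F_t(G)=2k$ and the bound is attained.

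The main obstacle is the first ingredient: rigorously showing that $L$ is a forcing set for arbitrary $k$, a statement Theorem~\ref{iffkparallel} only records for $k=3$. I would establish this by processing vertices in a left-to-right order, so that at each step the leftmost uncolored vertex $v_j^i$ is forced by its colored path-predecessor $v_{j-1}^i$; the non-crossing property of segments in a standard drawing is precisely what guarantees that the remaining (non-path) neighbors of $v_{j-1}^i$ lie strictly to the left of $v_j^i$ and so have already been colored. Formalizing this induction carefully on the left-to-right order of vertices is where the real work lies.
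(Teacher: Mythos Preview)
Your proposal is correct and follows essentially the same approach as the paper: take the leftmost vertices of the parallel paths as a forcing set, augment each with a neighbor (the second vertex of the path, or an arbitrary neighbor when the path is a singleton) to obtain a TF-set of size at most $2k$, and verify sharpness via a disjoint union of $k$ paths. Your sharpness witness $kP_2$ is the minimal instance of the paper's family of $k$ disjoint paths of length at least one, and your justification that each component forces two vertices into any TF-set is the same. You are in fact more careful than the paper on one point: the paper invokes Theorem~\ref{iffkparallel} (a $k=3$ statement) for general $k$, relying implicitly on Remark~\ref{rem1}, whereas you flag this and sketch the left-to-right induction that actually proves it.
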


We close this section by the following conjecture which is supported by Theorems~\ref{iffkparallel} and \ref{ftwo} and also by 
 Observation~\ref{p1} and Remark~\ref{rem1}. 
\begin{conjecture}\label{conj}
For a graph $G$ with $\Delta(G) \leq 3$, $F(G) = k$ for some $k\in[ n ]$ if and only if $G$ is a graph of $k$-parallel paths.
\end{conjecture}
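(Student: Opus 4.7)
The plan is to prove both directions of this equivalence by linking a minimum forcing set with a standard drawing of $G$, leveraging the fact that both structures naturally decompose $V(G)$ into the same number of induced paths. Let $\kappa(G)$ denote the minimum $k$ such that $G$ admits a standard drawing as $k$ parallel paths; by the recursive definition, $G$ is a graph of $\kappa(G)$-parallel paths. The conjecture then asserts $F(G) = \kappa(G)$, so I would split the proof into $F(G) \leq \kappa(G)$ and $F(G) \geq \kappa(G)$.

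For $F(G) \leq \kappa(G)$, I would generalize the second assertion of Theorem~\ref{iffkparallel}: fix a standard drawing of $G$ with parallel paths $P^1, \ldots, P^k$, where $k = \kappa(G)$, and argue that the set $F$ of leftmost vertices of the $P^i$'s is a forcing set. The forcing proceeds by a left-to-right sweep. Using $\Delta(G) \leq 3$, each internal vertex of a path carries at most one incident segment, and each path-endpoint at most two. The non-crossing property imposes a consistent ordering on segments: a segment joining $v \in P^i$ at horizontal position $p$ with $u \in P^j$ at position $q \leq p$ must be \emph{absorbed} (its other endpoint colored) before we attempt to force past $v$. Formalizing this as a sweep-line induction on the $x$-coordinate, with case analysis at path-endpoints, should yield a valid forcing schedule in which every chain is completed from left to right.

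For the reverse inequality $F(G) \geq \kappa(G)$, start from a minimum forcing set $F$ of size $k$ with a corresponding chain decomposition $R_1, \ldots, R_k$ into induced paths. The task is to build a standard drawing of $G$ whose parallel paths are exactly the chains $R_i$. The key structural lemma I would establish is a monotonicity constraint: if $v_j^i \in R_i$ forces $v_{j+1}^i$ at step $j$, then every neighbor of $v_j^i$ lying outside $R_i$ must already be colored and therefore must sit at position at most $j$ in its own chain. Combined with $\Delta(G) \leq 3$ — which caps the number of segments per vertex at one for internal chain vertices and two for endpoints — this should eliminate many potential crossings. What remains is to choose a vertical ordering $R_{\pi(1)}, \ldots, R_{\pi(k)}$ together with a horizontal embedding of each chain so that the segments can be realized as non-crossing straight lines.

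The main obstacle will be the vertical ordering step in the reverse direction. It is essentially a combinatorial planarity problem: given the chains, their monotone segment structure, and the degree constraints, decide whether some permutation admits a non-crossing straight-line realization, and if so find one. The chain decomposition is not unique, so one promising strategy is to show that among all minimum chain decompositions \emph{some} decomposition admits a compatible ordering; another is a swap/exchange argument that iteratively reduces crossings in any candidate arrangement, perhaps encoding segment endpoints as intervals on each chain and invoking an interval- or circle-graph-type characterization. It is at this ordering step — rather than in the forcing or segment-absorption arguments — that a genuinely new combinatorial idea will be required, and this is plausibly why the conjecture is stated as open rather than as a theorem.
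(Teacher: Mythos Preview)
The statement you are addressing is a \emph{conjecture} in the paper, not a theorem; the paper offers no proof of it. What the paper does establish is the equivalence for $k \leq 3$ (Observation~\ref{p1}, Theorem~\ref{ftwo}, Theorem~\ref{iffkparallel}) together with one direction for general $k$: Remark~\ref{rem1} asserts that any graph of $k$-parallel paths has forcing number at most $k$, via the same leftmost-vertex sweep you sketch. So your plan for $F(G) \leq \kappa(G)$ matches the paper's and is essentially complete (and in fact does not even need the hypothesis $\Delta(G)\leq 3$).

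For the reverse direction your strategy---take a minimum chain decomposition and realize it as a non-crossing drawing---is exactly the route the paper follows for $k = 3$ (Lemma~\ref{representation}), and you correctly locate the obstruction at the ordering/embedding step. Two remarks, however. First, your monotonicity claim is misstated: if $v_j^i$ forces $v_{j+1}^i$, a neighbor $w$ of $v_j^i$ in another chain must indeed be colored before $v_{j+1}^i$, but its \emph{position} in its own chain need not be at most $j$, since different chains progress at different rates. The correct non-crossing facts are the paper's Lemma~\ref{cross} (segments between a fixed pair of chains do not cross) and Lemma~\ref{crosss} (a three-chain analogue); these are what your argument actually needs. Second, even for $k = 3$ the paper cannot read off a standard drawing from an arbitrary chain set: it must first modify the decomposition by chain-swapping moves to eliminate ``bad'' vertices (Lemma~\ref{nobadlemma}) and then ``unfavorite'' vertices (Lemma~\ref{unless}), and only afterwards invoke a constructive drawing lemma (Lemma~\ref{pro}). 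Your proposed swap/exchange argument is in this spirit, but already at $k = 3$ the case analysis is delicate and exploits that there are only three chains; no inductive or uniform mechanism is visible for lifting it to general $k$. That, rather than any gap you have overlooked, is why the statement remains a conjecture.
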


\section{Known Results and Preliminary Lemmas}
This section is devoted to review some known results and to prove some key lemmas being used for the proofs of main results. We start with the following obvious observation.
\begin{observation}\label{p1}{\rm \cite{r}}
For a graph $G$,  $F(G)=1$ if and only if $G$ is isomorphic to a path. 
\end{observation}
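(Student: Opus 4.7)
The plan is to verify the two implications separately, with both leaning on the chain‑set formalism that was already recalled in the introduction. For the forward direction, I would fix $G \cong P_n$ with vertices labeled $v_1,v_2,\ldots,v_n$ along the path, take $F=\{v_1\}$ as the initial colored set, and argue by a routine induction on the step index that $v_i$ becomes colored at step $i-1$: at that step $v_{i-1}$ is colored and its unique non-colored neighbor is $v_i$ (its only other neighbor $v_{i-2}$, if it exists, was colored at an even earlier step), so $v_{i-1}$ forces $v_i$. This gives $F(G)\leq 1$, and since $V(G)\neq\emptyset$ forces $F(G)\geq 1$, equality follows.

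For the backward direction, I would invoke the chain-decomposition machinery stated just after the definition of $F(G)$. If $F=\{v_0\}$ with $|F|=1$ is a forcing set, the discussion in Section~1 guarantees that $V(G)$ is partitioned into $|F|=1$ ordered chain $R_1=(v_0,v_1,\ldots,v_{n-1})$ in which every $v_j$ with $j\geq 1$ is forced by $v_{j-1}$. The same discussion records that $G[V(R_1)]$ is an induced path, and since $V(R_1)=V(G)$, this means $G$ itself is the induced path $v_0 v_1\cdots v_{n-1}$, i.e.\ $G\cong P_n$.

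The only mild subtlety — the closest thing to an obstacle — is making sure that when $|F|=1$ a single chain really does exhaust all of $V(G)$ and that no extra edges hide outside the chain. Both points are already baked into the chain-set formalism recalled earlier: the partition statement supplies the covering, and the induced-path property supplies the absence of chords. Consequently the observation is essentially a direct bookkeeping consequence of the definitions, which is why it is labelled as an observation and attributed to~\cite{r}.
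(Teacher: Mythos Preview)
Your argument is correct in both directions. However, the paper itself does not supply a proof of Observation~\ref{p1}; it is simply stated as a known fact and attributed to~\cite{r}, so there is nothing in the paper to compare your proof against. Your use of the chain-set formalism from the introduction is exactly the natural way to handle the implication $F(G)=1\Rightarrow G\cong P_n$: with a single chain the partition and induced-path statements already recorded there give the conclusion immediately, and the other direction is the routine forcing-along-a-path computation you describe.
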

A stronger similar result was proved in~\cite{aim}.
\begin{theorem}\label{M=0}{\rm \cite{aim}}
For a graph $G$,  $M(G)=1$ if and only if $G$ is isomorphic to a path. 
\end{theorem}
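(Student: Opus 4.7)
The ($\Leftarrow$) direction is immediate from material already in the paper: Observation~\ref{p1} gives $F(P_n)=1$; the introduction recalls that zero forcing was introduced precisely to give the bound $M(G)\le F(G)$; and Section~\ref{Sec1.1} records $M(G)\ge 1$ for every graph. Chaining these, $1\le M(P_n)\le F(P_n)=1$, so $M(P_n)=1$. This is the entire backward direction.

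For ($\Rightarrow$) I would argue by contrapositive: assuming $G$ is not a path, exhibit an $A\in S(G)$ with $\dim\ker A\ge 2$. The non-paths split into three structural cases. \emph{Case 1 (disconnected):} with components $G_1,\dots,G_c$ for $c\ge 2$, pick $A_i\in S(G_i)$ and $\xx_i\in\ker A_i\setminus\{\zero\}$ (available since $M(G_i)\ge 1$); the block diagonal $A=A_1\oplus\cdots\oplus A_c$ lies in $S(G)$, and its two independent null vectors placed in disjoint blocks give $\dim\ker A\ge 2$. \emph{Case 2 (connected tree, not a path):} some vertex $v$ has degree at least three, so $G-v$ splits into $r\ge 3$ subtrees $T_1,\dots,T_r$. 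I would proceed by induction on $|V(G)|$: using rank-deficient tridiagonal blocks along the branches and the freedom in the diagonal entry $a_{vv}$, assemble two independent null vectors of $A\in S(G)$ each supported on a complementary pair of branches. \emph{Case 3 (connected, contains a cycle):} take a shortest cycle $C=v_1\cdots v_kv_1$; the weighted adjacency of $C_k$ shifted by $-2\cos(2\pi/k)I_k$ has an explicit two-dimensional kernel spanned by discrete-Fourier-type vectors, which I would propagate into an $A\in S(G)$ by choosing the free diagonal entries and the weights on edges outside $C$ so as to preserve at least two null vectors.

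\textbf{Main obstacle.} The real difficulty is Case 3: extending a cycle-supported null vector $\xx$ by zeros to $V(G)\setminus V(C)$ forces $\sum_{u\in N(v)\cap V(C)} a_{vu}x_u=0$ at every $v\notin V(C)$, a constraint that a generic null vector of the cycle need not satisfy. My plan is to treat the off-diagonal weights on $C$ and the diagonal entries of $A$ as free parameters and set up the requirement of two independent null vectors satisfying all attachment equations as a linear system; a dimension count should give generic solvability, though handling degenerate configurations may require a judicious choice of cycle (for instance a shortest one, so that few non-cycle vertices are adjacent to many cycle vertices). Case 2 is conceptually lighter but still demands careful bookkeeping through the induction, as the diagonal entry $a_{vv}$ must simultaneously accommodate the two branch-wise null vectors being glued.
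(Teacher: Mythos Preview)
The paper does not prove Theorem~\ref{M=0}; it is quoted as a known result from~\cite{aim} (the characterization is classical, going back to Fiedler), so there is no in-paper argument to compare against. Your backward direction and Case~1 are correct as written.

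Cases~2 and~3 are outlines with genuine gaps. In Case~2 the gluing at the high-degree vertex $v$ is the crux: a single choice of $a_{vv}$ and of the weights $a_{vu_i}$ must make the row-$v$ equation vanish for \emph{both} target null vectors, and you have not said how (one workable route sets $x_v=y_v=0$ and uses the $r\ge 3$ free weights $a_{vu_i}$ to kill two linear forms, but then all $a_{vu_i}$ must be shown nonzero). Case~3 is, as you concede, the real obstacle, and the proposed dimension count is not yet an argument: the attachment constraints are bilinear in the weights and the null-vector entries, so parameter-counting alone does not yield solvability, and any ``generic'' claim must respect the open condition that edge-weights are nonzero. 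Your case split also hides a further difficulty: when $G$ is exactly a cycle your Fourier vectors finish it, but when $G$ properly contains a cycle the branch idea from Case~2 need not transfer, since a degree-$\ge 3$ vertex in a non-tree need not be a cut vertex and $G-v$ need not decompose into subtrees. The classical proofs proceed on the matrix side rather than by case-by-case null-vector constructions, and you may find it easier to import that argument than to complete the present plan.
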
 

As stated before, the forcing number has been defined as a tool for studying the maximum nullity of graphs. 
The next theorem states that the maximum nullity of a graph $G$ does not exceed $F(G)$.
\begin{proposition}\label{mg}{\rm \cite{aim}} 
For any graph $G$, $M(G) \leq F(G)$. 
\end{proposition}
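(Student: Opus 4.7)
The plan is to use a standard null-space propagation argument: we show that for every symmetric matrix $A\in S(G)$, the coordinates on any forcing set $F$ of $G$ already determine a null vector of $A$.

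First I would fix an arbitrary matrix $A=(a_{ij})\in S(G)$ and an arbitrary forcing set $F\subseteq V(G)$ of $G$ with $|F|=F(G)$. Let $\pi\colon \ker(A)\to \mathbb{R}^{F}$ be the projection that restricts a null vector $\xx$ to its coordinates indexed by $F$. The heart of the proof is to show that $\pi$ is injective, for then
\begin{equation*}
M(G)-1+1=\operatorname{null}(A)=\dim\ker(A)\leq \dim\mathbb{R}^{F}=|F|=F(G),
\end{equation*}
and taking the maximum over $A\in S(G)$ yields $M(G)\leq F(G)$.

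To establish injectivity, it is equivalent to show that any $\xx\in\ker(A)$ with $\xx_v=0$ for every $v\in F$ must be the zero vector. Here I would induct on the time step $i$ of the forcing process, using the partition $V_0=F,V_1,V_2,\ldots$ produced by running the process from $F$, and prove that $\xx_v=0$ for every $v\in\bigcup_{j\leq i}V_j$. The base case $i=0$ is the hypothesis. For the inductive step, take $w\in V_{i+1}$; by definition there is a vertex $v\in \bigcup_{j\leq i}V_j$ such that $w$ is the unique non-colored neighbor of $v$ at step $i$. Reading off the $v$-th coordinate of the equation $A\xx=\zero$ gives
\begin{equation*}
a_{vv}\xx_v+\sum_{u\in N(v)\setminus\{w\}} a_{vu}\xx_u + a_{vw}\xx_w=0.
\end{equation*}
By the inductive hypothesis, $\xx_v=0$ and $\xx_u=0$ for every $u\in N(v)\setminus\{w\}$ (since all such $u$ are already colored by step $i$). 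Hence $a_{vw}\xx_w=0$, and because $vw$ is an edge of $G=\mathcal{G}(A)$ we have $a_{vw}\neq 0$, so $\xx_w=0$. Since $F$ is a forcing set, $\bigcup_i V_i=V(G)$, and therefore $\xx=\zero$, as desired.

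The only real subtlety is recognizing that the crucial property $a_{vw}\neq 0$ for $vw\in E(G)$ is built into the definition of $S(G)$; the combinatorics of the forcing rule is precisely tailored so that, at each step, all but one summand in the row-equation is known to vanish, allowing the single unknown to be solved for. I do not anticipate any genuine obstacle, since the argument only uses the defining properties of $S(G)$ and of a forcing set, together with elementary linear algebra.
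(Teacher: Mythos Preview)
The paper does not give its own proof of this proposition; it is quoted from \cite{aim} as a known result and then used as a tool. Your argument is correct and is exactly the standard null-space propagation proof from that reference: one shows that any $\xx\in\ker(A)$ vanishing on a forcing set must vanish everywhere, by reading off the row of $A\xx=\zero$ indexed by the forcing vertex at each step.

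One small slip worth fixing: the displayed chain beginning with $M(G)-1+1=\operatorname{null}(A)$ is not right as written, since for an arbitrary $A\in S(G)$ the nullity need not equal $M(G)$. The intended inequality is simply $\operatorname{null}(A)=\dim\ker(A)\leq |F|=F(G)$, valid for every $A\in S(G)$, after which taking the maximum over $A$ yields $M(G)\leq F(G)$, as you correctly say in the next sentence.
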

Since, for computing $M(G)$, we need to evaluate an infinite number of matrices and for computing the 
forcing number we may only check a finite number of sets, researchers interested 
in knowing the maximum nullity of a graph will find Proposition~\ref{mg} very useful. 
The following theorem introduced some graphs satisfying equality in Proposition~\ref{mg} and thus some graphs for 
which the maximum nullity could be computed.
%\begin{theorem}\label{mz}~{\rm \cite{aim}} For each of the following families of graphs, $F(G) = M(G)$:
%\begin{enumerate}
%\item
%Any graph $ G $ such that $ \vert G \vert \leq 6 $.
%\item $ K_{n}, P_{n}, C_{n} $.
%\item Any tree $ T $.
%\item All the graphs listed in Table 1. {\red Must be hyper-refed}
%\end{enumerate}
%\end{theorem}

It is known that the graphs with maximum nullity one are exactly the paths~\cite{aim}.
Characterizing the graphs with maximum nullity two,  
Johnson et. al.~\cite{jls} introduced a family of graphs $\mathcal{F}$ containing six types of graphs
(for more detail, see Table B1 in~\cite{jls})  and proved the next theorem.  
\begin{theorem}\label{mtwo}{\rm\cite{jls}} For a graph $G$, $M(G) = 2$ if and only 
if $G$ is a graph of $2$-parallel paths or $G$ is in $\mathcal{F}$.
\end{theorem}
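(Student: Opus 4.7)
The plan is to prove the two implications separately, handling the generic case of graphs of $2$-parallel paths via the forcing number and the exceptional family $\mathcal{F}$ by direct verification.

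First I would tackle the ``if'' direction. Suppose $G$ is a graph of $2$-parallel paths. Since $G$ is by definition not a path, Theorem~\ref{M=0} gives $M(G) \geq 2$. For the matching upper bound I would argue, analogously to Theorem~\ref{iffkparallel} but for $k=2$, that the two left-most vertices of the parallel paths form a forcing set: the non-crossing condition in the standard drawing guarantees that at each step, the left-most colored vertex on each parallel path has exactly one non-colored neighbor to force, so the coloring propagates along both paths simultaneously. Thus $F(G) \leq 2$, and Proposition~\ref{mg} yields $M(G) \leq F(G) \leq 2$. For each of the six families in $\mathcal{F}$, I would verify $M(G) = 2$ by hand: exhibit a symmetric matrix in $S(G)$ with $2$-dimensional kernel, and bound $M(G)$ from above either by giving a short forcing set or by a direct minimum-rank computation on a small graph.

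For the harder ``only if'' direction, assume $M(G) = 2$, equivalently $mr(G) = n-2$. Fix $A \in S(G)$ whose kernel $K$ has dimension $2$. The approach is to extract structural information from the supports of vectors in $K$: for each vertex $v$, the row identity $\sum_{u \sim v} a_{vu} x_u + a_{vv} x_v = 0$ restricts the non-zero patterns of kernel vectors on the closed neighborhood of $v$, and the restrictions become increasingly rigid as $\deg(v)$ grows. The plan is to leverage these constraints to produce a decomposition of $V(G)$ into two induced paths together with a planar standard drawing, unless $G$ is small or symmetric enough to fall into one of the six exceptional configurations.

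The main obstacle will be the last step of the ``only if'' direction: turning local kernel-support constraints into a global two-path decomposition with non-crossing segments. This obstruction is precisely why $\mathcal{F}$ must be carved out, since certain small or highly symmetric graphs (for instance $K_4$ and small complete bipartite graphs) admit kernels of dimension two while resisting any non-crossing two-path drawing. I would handle these by a careful case split based on the maximum degree, on the number and lengths of induced cycles, and on the existence of a ``long'' induced path covering most of the vertex set, isolating the six offending families as exactly the graphs for which a standard drawing with two parallel paths cannot be constructed despite the rank deficiency being only two.
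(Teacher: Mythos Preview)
This theorem is not proved in the paper at all: it is quoted from \cite{jls} (Johnson, Loewy, and Smith) in the section of known results and preliminary lemmas, and the paper uses it as a black box in the proof of Theorem~\ref{fmk}. So there is no ``paper's own proof'' to compare against, and you were attempting to supply an argument for a result that the authors explicitly import from the literature.

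On the substance of your sketch: the ``if'' direction is fine in outline, and in fact the $2$-parallel-paths case can be handled exactly as you suggest (left-most vertices form a forcing set, then apply Proposition~\ref{mg} and Theorem~\ref{M=0}). The ``only if'' direction, however, is where the real content lies, and your proposal is too vague to count as a plan. The reduction from $mr(G)=n-2$ to a structural decomposition into two non-crossing induced paths (up to the exceptional family $\mathcal{F}$) is a substantial classification carried out in \cite{jls}; it does not follow from looking at supports of a single kernel basis, and your case split on maximum degree, induced cycles, and long induced paths is not sharp enough to isolate precisely the six exceptional types. In particular, you have not indicated any mechanism that would force the two covering paths to be \emph{induced} and the inter-path edges to be \emph{non-crossing}, which is the crux of the definition of $2$-parallel paths.
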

By Proposition~\ref{mg}, a graph with maximum nullity two has the forcing number at least two.  
Using Theorem~\ref{mtwo}, next result by Row~\cite{r} characterizes the graphs with forcing number and maximum nullity two. 
%Row in \cite{r} studied the forcing number of two parallel paths and proved the following theorem.
\begin{theorem}\label{ftwo}\cite{r} 
For a graph $G$, $F(G) = 2$ if and only if $G$ is a 
 graph of $2$-parallel paths. 
\end{theorem}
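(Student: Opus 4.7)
The plan is to establish both directions by exploiting the correspondence between the chain decomposition induced by a forcing set of size two and a standard drawing of a graph of $2$-parallel paths. The technical core of both directions is the equivalence between the geometric non-crossing condition for inter-path segments and the monotone progress of the forcing process along the two chains.

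For $(\Leftarrow)$, let $G$ be a graph of $2$-parallel paths with a fixed standard drawing and parallel paths labelled $R_{1}\colon u_{0}u_{1}\cdots u_{p}$ and $R_{2}\colon v_{0}v_{1}\cdots v_{q}$ in left-to-right order. Since $G$ is by definition not a path, Observation~\ref{p1} gives $F(G)\ge 2$, so it suffices to show that $F=\{u_{0},v_{0}\}$ is a forcing set. I would maintain the invariant that, after each forcing step, the coloured set has the form $\{u_{0},\dots,u_{a}\}\cup\{v_{0},\dots,v_{b}\}$. The inductive step is: whenever $a<p$ and $b<q$, at least one of $u_{a}$, $v_{b}$ has all its inter-chain neighbours already coloured and hence can force the next vertex on its own chain. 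If neither could, then $u_{a}$ would have an uncoloured neighbour $v_{j}$ with $j>b$ and $v_{b}$ an uncoloured neighbour $u_{i}$ with $i>a$; the segments $u_{a}v_{j}$ and $u_{i}v_{b}$ would necessarily cross in the standard drawing, a contradiction. The boundary cases $a=p$ or $b=q$ reduce to running the remaining chain to its end, every step succeeding for the same reason.

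For $(\Rightarrow)$, let $F=\{u,v\}$ be an $F(G)$-set and let $R_{1}=(u_{0},\dots,u_{p})$, $R_{2}=(v_{0},\dots,v_{q})$ with $u_{0}=u$, $v_{0}=v$ be the associated chains; by the discussion in the paper each $R_{i}$ induces a path, and together they partition $V(G)$. Draw $R_{1}$ and $R_{2}$ on two parallel horizontal lines with vertices in chain order from left to right; it remains to verify that no two inter-chain segments cross. Suppose for contradiction that edges $u_{a}v_{b}$ and $u_{c}v_{d}$ with $a<c$ and $b>d$ both exist. Since $u_{a}$ forces $u_{a+1}$, its neighbour $v_{b}$ must already be coloured at that step, so $k_{F}(v_{b})<k_{F}(u_{a+1})$; symmetrically, $v_{d}$ forces $v_{d+1}$, so $k_{F}(u_{c})<k_{F}(v_{d+1})$. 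Chain monotonicity along $R_{1}$ gives $k_{F}(u_{a+1})\le k_{F}(u_{c})$ (since $a+1\le c$) and along $R_{2}$ gives $k_{F}(v_{d+1})\le k_{F}(v_{b})$ (since $d+1\le b$). Concatenating the four inequalities yields $k_{F}(v_{b})<k_{F}(v_{b})$, a contradiction. Combined with Observation~\ref{p1} to exclude $G$ being a path, $G$ is a graph of $2$-parallel paths.

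The hardest part will be the crossing-versus-timing contradiction in the forward direction and its geometric mirror in the backward direction: one must precisely match a combinatorial inter-chain configuration with the ordering of the forcing steps, and handle boundary indices (chain endpoints, or the case when one chain has been fully coloured while the other has not) with some care. Everything else is routine once this key observation is in place.
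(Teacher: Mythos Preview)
The paper does not supply its own proof of Theorem~\ref{ftwo}; it is quoted from Row~\cite{r} as a known result and used as a black box, so there is nothing to compare your proposal against directly.

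That said, your argument is correct and in fact mirrors the machinery the paper builds for the $k=3$ case. Your $(\Rightarrow)$ direction is exactly Lemma~\ref{cross} specialised to two chains: the timing chain $k_F(v_b)<k_F(u_{a+1})\le k_F(u_c)<k_F(v_{d+1})\le k_F(v_b)$ is the two-path instance of the general non-crossing principle the paper states there. Your $(\Leftarrow)$ direction is the two-path analogue of Proposition~\ref{left}, with the same ``if forcing stalls at both frontiers, the witnessing segments cross'' contradiction. The paper itself remarks (just after Lemma~\ref{representation}) that its techniques subsume Theorem~\ref{ftwo}, and your write-up makes that subsumption explicit.
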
 
\begin{theorem}\label{mf}\cite{aim} 
For each of the following families of graphs, $F(G) =M(G)$; 
\begin{itemize}

\item[ 1)] any graph $G$ with $\vert G \vert \leq 6$, 

\item[2)] $K_n, P_n, C_n$,  and

\item[3)] any tree $T$.
\end{itemize}
\end{theorem}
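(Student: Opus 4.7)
The plan is to establish $F(G)\leq M(G)$ in each case, since the reverse inequality $M(G)\leq F(G)$ is already available from Proposition~\ref{mg}. The standard route for each family is to produce, for any target nullity $k$, a concrete symmetric matrix $A\in S(G)$ with $\dim\ker A=k$ matching a known upper bound for $F(G)$.

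For the easy entries in item (2) I would simply cite the results already in the paper. Observation~\ref{p1} and Theorem~\ref{M=0} together give $F(P_n)=M(P_n)=1$. For $C_n$, note that $C_n$ is a graph of $2$-parallel paths (split the cycle into two arcs drawn as parallel horizontal segments joined by two crossing-free segments at the ends), so Theorem~\ref{ftwo} gives $F(C_n)=2$ and Theorem~\ref{mtwo} gives $M(C_n)=2$. For $K_n$, any $n-1$ vertices form a forcing set (the unique non-colored vertex is forced immediately), so $F(K_n)\leq n-1$; and the all-ones matrix $J\in S(K_n)$ has rank $1$, yielding $M(K_n)\geq n-1$. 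Combined with Proposition~\ref{mg} this forces equality.

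For item (3), the trees, the plan is to route both parameters through the \emph{path cover number} $P(T)$, defined as the minimum number of vertex-disjoint induced paths needed to cover $V(T)$. The direction $F(T)\leq P(T)$ is immediate: given an optimal induced path cover, take one endpoint of each path as the initial colored set, and verify that, processing the tree from the leaves inward, each path can be forced along its own length without interference (this requires a short argument using the tree structure and the fact that crossing edges between covering paths would create a cycle). The matching inequality $P(T)\leq M(T)$ is the classical fact of Johnson--Leal--Saiago that minimum rank of a tree equals $n-P(T)$; equivalently $M(T)=P(T)$. Together these give $F(T)\leq P(T)=M(T)$, and Proposition~\ref{mg} closes the chain. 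The main obstacle here is proving $F(T)\leq P(T)$ cleanly; I would argue inductively on the number of leaves, peeling off a pendant path at a time and maintaining a path cover that is consistent with the forcing process.

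Finally, for item (1), graphs of order at most $6$, the natural approach is a finite case check, though some work can be saved. Any such graph $G$ with $M(G)\leq 2$ is either a path or a $2$-parallel-path graph or belongs to the finite family $\mathcal{F}$ of Theorem~\ref{mtwo}, and in each of these subcases Theorem~\ref{ftwo} or direct inspection pins down $F(G)$ to match. For $M(G)\geq 3$, one exploits that $n\leq 6$ forces $F(G)\leq n-1\leq 5$ and then, using the explicit structure of $G$ together with the construction of appropriate symmetric matrices realizing nullity $F(G)$ (block structures, or the all-ones completion for cliques, or sums of such blocks), one verifies $M(G)\geq F(G)$ in each remaining case. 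The potentially tedious piece is enumerating all non-isomorphic connected graphs on $\leq 6$ vertices with $F(G)\geq 3$ and exhibiting a realizing matrix for each; this is the main obstacle, but it is a bounded computation rather than a conceptual one.
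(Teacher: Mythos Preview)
The paper does not prove Theorem~\ref{mf}; it merely quotes it from~\cite{aim} as a known result, with no argument supplied. So there is no ``paper's own proof'' to compare your proposal against.

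That said, your outline is broadly the standard one from the literature and is largely sound. A few remarks. For $C_n$ and $K_n$ your arguments are clean. For trees, routing both $F(T)$ and $M(T)$ through the path cover number $P(T)$ is exactly what~\cite{aim} does; the equality $M(T)=P(T)$ is the Johnson--Leal~Duarte theorem, and the inequality $F(T)\leq P(T)$ is proved in~\cite{aim} by the endpoint-of-each-path argument you describe. Be careful, though, about invoking Theorems~\ref{mtwo} and~\ref{ftwo} to handle the $n\leq 6$ case: those results postdate~\cite{aim}, so if you want to reproduce the \emph{original} proof rather than a retrospective shortcut, you should instead appeal to the direct computer enumeration of all graphs on at most six vertices carried out in~\cite{aim}. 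Your proposed reduction via $M(G)\leq 2$ is logically fine as a new proof, but it is not how the cited source established the result.
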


Let $G$ be a graph and $F$ be an $F(G)$-set. Consider a chain set $\S =\{R_{1},\ldots,R_{|F|}\}$ with respect to $F$. 
Note that each of these chains is an induced path in $G$.  
We are interested to find some necessary condition for $\S$ to have a standard drawing of $G$ whose 
parallel paths are exactly these chains. 
To do it, in this paper we consider these chains (induced paths) as horizontal paths such that the set of  left-most vertices of these paths is $F$. Also, we draw the edges with ends in different paths as segments (not curves) with the least possible interruption between them
(no three edges can have a common point but in the vertices). 
Note that in this case if $G$ has zero interruption, then it is a graph of $k$-parallel paths where $k\in[|F|]$. 
In this drawing, any edge with end points in different paths is called a {\it segment}. For each chain $R_i$, it is clear that the forcing process 
induces an ordering to the vertices of $R_i$ increasing from left to right. 
In other words, $u <_{R_i} v$ means that $u$ and $v$ are in the chain $R_i$ and $u$ precedes $v$ in this chain. 
By $\chain_{\S}(v) $, we denote the forcing chain in $\S$ containing $v$. 
Furthermore, $\nex_{R_i}(v)$ is the neighbor of $v$ in the chain $R_i$ such that $v<_{R_i} \nex_{R_i}(v)$, 
if there exist; $\prev_{R_i}(v)$ is defined analogously. 

%Our main goal is to present some sufficient condition for the graphs with a possible standard drawing whose. Next, we prove that for a graph $ G $ there exists a drawing of a maximal chain set such that the edges between the chains do not interrupt each other. 

In our approach to prove the main results, we crucially deal with different possible cases of the number of trivial and 
non-trivial chains corresponding to an $F(G)$-set of a graph $G$. 
In this regard, we first observe the following simple assertion.  

\begin{observation}\label{singleton}
Let $G$ be a non-trivial graph with $F(G) = k$ and $ F $ an $ F(G) $-set. Every chain set $\S$ corresponding to $F$ has at most $k-1$ trivial chains.
\end{observation}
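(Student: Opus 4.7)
The plan is a short counting argument: since the chains in $\S$ partition $V(G)$, the total number of vertices they contain equals $|V(G)|$, while every trivial chain contributes just one vertex. So if all $k$ chains were trivial, one would get $|V(G)|=k$. I will rule this out by showing that for a non-trivial $G$ with $F(G)=k$ one must have $|V(G)|\geq k+1$.

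To establish $|V(G)|\geq k+1$, I would invoke the fact, already noted in the introduction, that $F(G)=|V(G)|$ holds if and only if $G$ is edgeless. If one prefers an explicit verification, pick any edge $uv$ of the non-trivial graph $G$ and observe that $V(G)\setminus\{u\}$ is a forcing set: at step $0$ every vertex except $u$ is colored, and the colored vertex $v$ has $u$ as its unique non-colored neighbor, so $v$ forces $u$. Hence $F(G)\leq |V(G)|-1$, giving $|V(G)|\geq k+1$.

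Now suppose for contradiction that $\S=\{R_1,\ldots,R_k\}$ has $k$ trivial chains, i.e.\ every $R_i$ consists of a single vertex. Since the chains partition $V(G)$, we obtain $|V(G)|=\sum_{i=1}^{k}|V(R_i)|=k$, contradicting $|V(G)|\geq k+1$. Therefore at least one chain of $\S$ must be non-trivial, which is exactly the claim that at most $k-1$ chains of $\S$ are trivial.

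I do not anticipate a real obstacle here; the only fact used beyond the definitions is the characterization $F(G)=|V(G)|\iff E(G)=\emptyset$, which is already recorded in the preliminaries. The argument applies uniformly to every chain set corresponding to $F$ because it depends only on the partition property, not on any particular choice of forcing order.
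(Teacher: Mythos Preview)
Your proof is correct and follows essentially the same approach as the paper's: assume all $k$ chains are trivial, deduce $|V(G)|=k$, and contradict the inequality $F(G)\leq |V(G)|-1$ for non-trivial $G$ that the introduction already records. The only difference is cosmetic---you spell out the verification of $F(G)\leq |V(G)|-1$ via an explicit forcing set $V(G)\setminus\{u\}$, while the paper simply cites the characterization---but the logical content is identical.
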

\begin{proof}{
For a contradiction, suppose that $\S$ has $k$ trivial chains and consequently, $\vert V(G) \vert =k $. But, 
we know $k=\vert F \vert \leq \vert V(G) \vert -1 =k-1 $, a contradiction.
}\end{proof}

Although, the following lemma has a straight proof, it plays a key role throughout the proofs.

\begin{lemma}\label{cross1} 
Let $G$ be a graph, $F$ an $F(G)$-set, and $\S$ a chain set corresponding to $F$. 
Let $x$ and $y$ be two vertices such that $x,y \in R \in \S$ and $x<_{R}y$. Then $k_F(z)<k_F(y)$ for any $z\in N(x)\setminus V(R)$. 
\end{lemma}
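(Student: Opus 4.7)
The plan is to exploit the fact that $x$ must force its successor in the chain $R$, and at the moment a vertex forces, all of its neighbors except the one being forced are already colored. This will immediately pin down when $z$ is colored relative to $\nex_R(x)$, and then monotonicity of the coloring time along a chain finishes the job.

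First I would set up notation: write the chain as $R=(v_0,v_1,\ldots,v_k)$, and let $i<j$ be the indices with $x=v_i$ and $y=v_j$. Since $x<_R y$ we have $i<j\le k$, so in particular $v_{i+1}$ exists and $x$ forces $v_{i+1}$ by the definition of a chain set. I would then invoke the forcing rule: at the step $k_F(v_{i+1})$, when $x$ performs this forcing, its unique non-colored neighbor is $v_{i+1}$, so every other neighbor of $x$, including any $z\in N(x)\setminus V(R)$, must already be colored at the start of that step. This gives $k_F(z)<k_F(v_{i+1})$.

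The second ingredient is that $k_F$ is strictly increasing along a chain: if $v_{\ell-1}$ forces $v_\ell$ at step $k_F(v_\ell)$, then $v_{\ell-1}$ was colored strictly earlier, so $k_F(v_{\ell-1})<k_F(v_\ell)$. Applying this from index $i+1$ up to $j$ yields $k_F(v_{i+1})\le k_F(v_j)=k_F(y)$, with equality exactly when $j=i+1$. Combining this with the previous inequality gives the desired $k_F(z)<k_F(y)$.

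The only thing to double-check is that $x$ really does force a successor, i.e., that $i<k$; but this is guaranteed by $j>i$ together with $j\le k$. There is no genuine obstacle here — the entire argument is a direct unpacking of the forcing rule and the definition of the chain set, and I expect the write-up to be just a few lines.
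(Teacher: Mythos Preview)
Your proof is correct and follows essentially the same approach as the paper: at the moment $x$ forces $\nex_R(x)$, every other neighbor of $x$---in particular $z$---is already colored, so $k_F(z)<k_F(\nex_R(x))\le k_F(y)$. The only difference is that you spell out the monotonicity of $k_F$ along a chain explicitly, whereas the paper leaves this implicit.
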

\begin{proof}{
When $x$ is forcing $\nex_{R}(x)$, all the neighbors of $x$ but $\nex_{R}(x)$ are colored. 
Therefore, $z$ is colored before  $\nex_{R}(x)$ and consequently before $y$. 
This implies that $k_F(z)< k_F(y)$.
}\end{proof}
The next two lemmas are immediate consequences of this lemma. 
\begin{lemma}\label{cross} 
Let $G$ be a graph, $F$ an $F(G)$-set, and $\S$ a chain set corresponding to $F$. 
For any edge $xy$ with endpoints $x$ and $y$ in different chains $R_1,R_2\in \S$ respectively, there is no edge $x'y'$ 
such that $x'\in R_1, y'\in R_2$, and $x<_{R_1}x'$ and $y'<_{R_2}y$. 
%{\red In particular, in every MI-$F$ drawing of $S$,  the edges between any two different forcing chains of $S$ do not interrupt each other.}
\end{lemma}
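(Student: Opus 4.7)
The plan is to derive a contradiction directly from Lemma~\ref{cross1} by applying it symmetrically to the two chains $R_1$ and $R_2$. Suppose, for a contradiction, that edges $xy$ and $x'y'$ both exist with $x,x'\in R_1$, $y,y'\in R_2$, $x<_{R_1} x'$, and $y'<_{R_2} y$. Since $R_1\neq R_2$, note that $y\notin V(R_1)$ and $x'\notin V(R_2)$, so both $y$ and $x'$ qualify as vertices of the form $z\in N(\cdot)\setminus V(\cdot)$ in the hypothesis of Lemma~\ref{cross1}.

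First I would apply Lemma~\ref{cross1} to the chain $R_1$: the vertex $y$ is a neighbor of $x$ not lying on $R_1$, and $x<_{R_1} x'$, which yields $k_F(y)<k_F(x')$. Then I would apply Lemma~\ref{cross1} symmetrically to the chain $R_2$: the vertex $x'$ is a neighbor of $y'$ not lying on $R_2$, and $y'<_{R_2} y$, which yields $k_F(x')<k_F(y)$. The two inequalities together are inconsistent, giving the desired contradiction.

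There is essentially no obstacle here; the only subtlety worth flagging in the writeup is checking that $y\notin V(R_1)$ and $x'\notin V(R_2)$, which follows immediately from the fact that $\S$ is a partition of $V(G)$ into chains together with the hypothesis that $R_1\neq R_2$. Once that is recorded, the two applications of Lemma~\ref{cross1} close the argument in a single line each.
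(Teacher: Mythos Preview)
Your argument is correct and matches the paper's approach: the paper states that Lemma~\ref{cross} is an immediate consequence of Lemma~\ref{cross1}, and your two symmetric applications of Lemma~\ref{cross1} are precisely how one cashes that out.
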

\begin{lemma}\label{crosss} 
Let $G$ be a graph, $F$ an $F(G)$-set, and $\S$ a chain set corresponding to $F$. 
If there are three different chains $R_1,R_2,R_3\in \S$ and two edges $ab$ and $cd$ such that $a\in R_1, b,c\in R_2$, $d\in R_3$ and $c<_{R_2}b$, then there is no edge $xy$ such that $a<_{R_1}x$ and $y<_{R_3}d$.
\end{lemma}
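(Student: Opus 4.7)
The plan is to argue by contradiction and chain three applications of Lemma~\ref{cross1} together, producing a cyclic inequality among coloring times $k_F(\cdot)$. Suppose toward a contradiction that an edge $xy$ exists with $x \in R_1$, $y \in R_3$, $a <_{R_1} x$, and $y <_{R_3} d$.

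First, I apply Lemma~\ref{cross1} to the pair $a <_{R_1} x$ in $R_1$: since $b$ is a neighbor of $a$ lying in $R_2 \neq R_1$, the lemma gives $k_F(b) < k_F(x)$. Next, I apply Lemma~\ref{cross1} to the pair $c <_{R_2} b$ in $R_2$: since $d$ is a neighbor of $c$ lying in $R_3 \neq R_2$, the lemma gives $k_F(d) < k_F(b)$. Finally, I apply Lemma~\ref{cross1} to the pair $y <_{R_3} d$ in $R_3$: since the hypothesized edge $xy$ exhibits $x$ as a neighbor of $y$ lying in $R_1 \neq R_3$, the lemma gives $k_F(x) < k_F(d)$.

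Concatenating the three inequalities yields $k_F(x) < k_F(d) < k_F(b) < k_F(x)$, the desired contradiction, so no such edge $xy$ can exist. There is no real obstacle here; the argument is essentially a routine chaining of Lemma~\ref{cross1}. The only care needed is bookkeeping: at each invocation one must verify that the chosen ``off-chain'' neighbor really does lie outside the chain in question, and this is immediate from the hypothesis that $R_1, R_2, R_3$ are pairwise distinct.
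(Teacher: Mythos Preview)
Your proof is correct and matches the paper's intended approach: the paper simply declares Lemma~\ref{crosss} (together with Lemma~\ref{cross}) to be an ``immediate consequence'' of Lemma~\ref{cross1} without writing out the details, and your three-fold chaining of Lemma~\ref{cross1} to produce the cycle $k_F(x) < k_F(d) < k_F(b) < k_F(x)$ is exactly the routine verification the authors had in mind.
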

%%%%%%%%%%%%%%%%%%%%%%%%%%%%%%%%%%%%%%%%%%%%%%%%%%%%
%\begin{figure}
%\centering
%\begin{tikzpicture}
%\draw[line width=1pt] (0,0) -- (4,0);
%\draw[line width=1pt] (0,-1) -- (4,-1);
%\vertex[fill] (a2) at (1,0) [label=above:$a$][label=left:$R_{1}:\qquad\quad$]{};
%\vertex[fill] (a3) at (2,0) []{};
%\vertex[fill] (a4) at (3,0) [label=above:$b$]{};
%\vertex[fill] (a7) at (1,-1) [label=below:$c$][label=left:$R_{2}:\qquad\quad$]{};
%\vertex[fill] (a8) at (2,-1) []{};
%\vertex[fill] (a9) at (3,-1) [label=below:$d$]{};

%\path [line width=1pt]
%(a2) edge (a9)
%(a4) edge (a7)
%;
%\end{tikzpicture}
%\caption{\small An MI-$F$ drawing of two forcing chains $R_{1}$ and $R_{2}$ with the assumption that the segments $ ad $ and $cb$ of them interrupt each other }\label{f1}
%\end{figure}}
%%%%%%%%%%%%%%%%%%%%%%%%%%%%%%%%%%%%%%%%%%%%%%%%%%%%

Let $F$ be an $F(G)$-set of a graph $G$ and $\S$ a chain set corresponding to it. 
A vertex $v$ is called {\it bad with respect to $\S$} if there are two non-trivial chains $R_1,R_2\in \S$ such that $v\in R_1$ 
has two non-consecutive neighbors in $R_2$. When the chain set $\S$ is clear from the context, the vertex $v$ is called a bad vertex. 
Note that only vertices of non-trivial chains can be bad.
\begin{lemma}\label{initialbad}
Let $G$ be a graph with $\Delta(G)\leq 3$, $F$ an $F(G)$-set, and $\S$ a chain set corresponding to $F$. 
Every bad vertex with respect to $\S$ is an initial vertex of a chain in $\S$. 
\end{lemma}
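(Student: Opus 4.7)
The plan is a proof by contradiction. Suppose $v$ is bad with respect to $\S$ but is not the initial vertex of its chain. Let $R_1 \in \S$ denote the chain containing $v$ and let $R_2 \in \S$ be a chain in which $v$ has two non-consecutive neighbors $u_i, u_j$, with $u_i <_{R_2} u_j$ and $j \geq i+2$. The first step pins down $v$'s location in $R_1$: because $\Delta(G) \leq 3$ and $v$ already has the two external neighbors $u_i, u_j$, at most one further edge of $v$ can lie in $R_1$, so $v$ must sit at an end of the induced path $R_1$. Combined with the assumption that $v$ is not initial, this forces $v = v_k$ to be the last vertex of $R_1 = (v_0, \ldots, v_k)$ with $k \geq 1$, and $N(v) = \{v_{k-1}, u_i, u_j\}$ exactly.

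Next I would derive a lower bound on $k_F(u_j)$. Applying Lemma~\ref{cross1} inside $R_2$ with $x = u_i$ and $y = u_{i+1}$ gives $k_F(v) < k_F(u_{i+1})$. Combining this with the strict monotonicity $k_F(u_i) < k_F(u_{i+1}) < \cdots < k_F(u_j)$ along $R_2$ yields both $k_F(u_j) \geq k_F(u_i) + (j-i) \geq k_F(u_i) + 2$ and $k_F(u_j) \geq k_F(u_{i+1}) + (j - i - 1) \geq k_F(v) + 2$, hence
\[
k_F(u_j) \;\geq\; \max\bigl(k_F(v),\, k_F(u_i)\bigr) + 2.
\]

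The final step produces a matching upper bound. Set $s = \max(k_F(v), k_F(u_i)) + 1$. At the beginning of step $s$, the vertices $v_{k-1}, v, u_i$ are all colored by choice of $s$, while $u_j$ is still uncolored because $k_F(u_j) > s - 1$. Thus $u_j$ is the unique uncolored neighbor of $v$ at step $s$, and the forcing rule compels $u_j$ to be colored during step $s$, giving $k_F(u_j) \leq s = \max(k_F(v), k_F(u_i)) + 1$. This contradicts the lower bound from the previous paragraph, finishing the proof.

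The main obstacle I anticipate is a conceptual one about the chain set versus the actual forcing process. Because $R_1$ terminates at $v$, the chain set $\S$ credits $v$ with no forcing action, which at first glance seems to allow $v$ to remain idle even when its uncolored neighborhood shrinks to one vertex. The resolution, which I would state carefully in the write-up, is that the forcing process itself is deterministic: any colored vertex whose uncolored neighborhood has shrunk to a single vertex must force that vertex at the current step, regardless of whether $\S$ assigns credit to it. Once this point is made explicit, the upper bound $k_F(u_j) \leq s$ is legitimate and the numerical clash with $k_F(u_j) \geq s+1$ closes the argument.
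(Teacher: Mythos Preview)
Your proof is correct and follows essentially the same approach as the paper: both argue that if the bad vertex $v$ were the terminal (rather than initial) vertex of its chain, then $v$ would be in a position to force its farther neighbor $u_j$ on $R_2$ strictly before the chain $R_2$ reaches $u_j$, yielding a contradiction. Your version makes this explicit via matching upper and lower bounds on $k_F(u_j)$, whereas the paper phrases the same contradiction more informally (``$\nex_{R_2}(a)$ and $b$ cannot be in the same chain''), but the underlying mechanism---including the use of Lemma~\ref{cross1} and the deterministic nature of the forcing process that you rightly flag---is identical.
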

\begin{proof}{
Let $x\in R_1$ be a bad vertex with respect to $\S$, where $R_1$ is a chain in $\S$. 
In view of the definition of bad vertex, $R_1$ is not trivial and, 
consequently, has at least two vertices.  
Note that, since $\Delta(G)\leq 3$ and $R_1$ is an induced path, $x$ must be an end-vertex of $R_1$. 
Let $a,b\in R_2$ be the two neighbors of $x$, where $R_2\neq R_1$ is a chain in $\S$.  
For simplicity, assume that $a<_{R_2}b$, i.e., $a$ is the first  neighbor of $x$ according to the ordering on $R_2$. 
Since $a$ and $b$ are~not consecutive, $a<_{R_2}\nex_{R_2}(a)<_{R_2}b$. 
For a contradiction, suppose that $x$ is the last vertex of $R_1$. 
Note that when $a$ is forcing $\nex_{R_2}(a)$, the vertex $x$ must be colored and consequently,
since $x$ is the last vertex of $R_1$, the vertex $b$ can be forced at this step by $x$. 
This implies that $\nex_{R_2}(a)$ and $b$ can~not be in the same chain, a contradiction. }\end{proof}

\begin{lemma}\label{nobadlemma}
For a graph $G$ with $\Delta(G)\leq 3$  and $F(G)=3$, there are an $F(G)$-set $F$ 
and a chain set $\S$ corresponding to it with no bad vertex. 
\end{lemma}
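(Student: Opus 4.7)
My plan is to prove Lemma \ref{nobadlemma} by an extremal argument. Among all pairs $(F,\S)$ with $F$ an $F(G)$-set and $\S$ a corresponding chain set, fix one that minimizes the number of bad vertices, and aim for a contradiction if this minimum is positive. So suppose there is a bad vertex $x$ in a non-trivial chain $R_1$, with two non-consecutive neighbors $a=a_i$ and $b=a_j$ (where $i<j$ and $j-i\geq 2$) in a non-trivial chain $R_2$. By Lemma \ref{initialbad}, $x$ is the initial vertex of $R_1$; combined with $\Delta(G)\leq 3$ and the non-triviality of $R_1$, the three neighbors of $x$ are exactly $\nex_{R_1}(x)$, $a_i$, and $a_j$.

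The next step is to construct a modified chain set $\S'$ by transplanting the prefix of $R_2$ up to $a_i$ onto the front of $R_1$. Explicitly, set
\begin{align*}
R_1' &= (a_0,a_1,\ldots,a_i,x,\nex_{R_1}(x),\ldots,x_r),\\
R_2' &= (a_{i+1},\ldots,a_t),\\
R_3' &= R_3,
\end{align*}
and $F'=(F\setminus\{x\})\cup\{a_{i+1}\}$. Lemma \ref{cross} applied to the edges $xa_i$ and $xa_j$ forbids any chord between $\{a_0,\ldots,a_i\}$ and $\{x_1,\ldots,x_r\}$, so $R_1'$ is an induced path; and the forcing process from $F'$ realizes these chains step by step (the prefix fires $a_0\to a_1\to\cdots\to a_i\to x$, and once $a_j$ has been reached along $R_2'$ the vertex $x$ fires $x_1$, while $R_3$ proceeds independently). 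Hence $(F',\S')$ is a valid $F(G)$-set with chain set $\S'$.

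Then I would compare the numbers of bad vertices in $\S$ and $\S'$. By Lemma \ref{initialbad}, any bad vertex in $\S'$ must be an initial vertex of a chain, so the candidates are $a_0$, $a_{i+1}$, and $c_0$ (the initial vertex of $R_3$). The vertex $x$ has lost its initial status and so is no longer bad. A short case analysis using the degree bound together with Lemma \ref{cross} applied to the edge $xa_j$ (which prohibits edges from $\{x_1,x_2,\ldots\}$ into $\{a_0,\ldots,a_{j-1}\}$) shows that $a_0$ and $a_{i+1}$ each have at most one neighbor in any non-trivial chain of $\S'$ other than their own, so neither is bad.

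The principal obstacle is the vertex $c_0$: a priori it could have a neighbor $a_{i_1}$ (with $i_1<i$) previously in $R_2$ together with a neighbor $x_{j_1}$ (with $j_1\geq 1$) previously in $R_1$, so that both now lie in the merged chain $R_1'$ at non-consecutive positions, making $c_0$ newly bad. To rule this out I plan to apply Lemma \ref{crosss} to the pair of edges $(c_0x_{j_1}, xa_j)$, which share chain $R_1$, yielding that no edge can go from $\{c_1,c_2,\ldots\}$ into $\{a_0,\ldots,a_{j-1}\}$; combined with $\Delta(G)\leq 3$ this rigidifies the adjacency structure enough to exhibit two vertex-disjoint induced paths that cover $V(G)$, using the diagonal edges $xa_j$ and $c_0x_{j_1}$ as bridges among the three original chains. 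By Theorem \ref{ftwo} such a decomposition forces $F(G)=2$, contradicting $F(G)=3$. Hence $c_0$ is not newly bad, $\S'$ has strictly fewer bad vertices than $\S$, and the extremality assumption is contradicted, proving the lemma.
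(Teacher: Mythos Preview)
Your extremal setup and the swap producing $\S'$ match the paper's approach exactly (your $R_1',R_2'$ are the paper's $R_2',R_1'$). One minor inaccuracy: you assert that $a_0$ has at most one neighbor in each foreign chain of $\S'$, but $a_0$ could well have two non-consecutive neighbors in $R_3$. What is true, and is all you need, is that if $a_0$ is bad with respect to $\S'$ then it was already bad with respect to $\S$, so the count does not increase; this is exactly how the paper handles $y$.

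The genuine gap is your treatment of $c_0$. There are two problems. First, you never actually construct the two vertex-disjoint induced paths covering $V(G)$; the edges $xa_j$ and $c_0x_{j_1}$ are available as bridges, but it is not at all clear how to thread the leftover pieces $a_{i+1},\ldots,a_{j-1}$, the prefix $x_1,\ldots,x_{j_1-1}$, and the suffix $a_j,\ldots,a_t$ (which may still carry many uncontrolled segments to $R_1$ and $R_3$) into only two induced paths. Second, and more seriously, Theorem~\ref{ftwo} says $F(G)=2$ if and only if $G$ is a graph of $2$-parallel paths, which demands a non-crossing drawing, not merely a cover by two induced paths. With $\Delta\leq 3$ such a cover does not force $F(G)=2$: take $K_4$, which is covered by two disjoint edges yet has $F(K_4)=3$. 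So even if your two paths existed, the appeal to Theorem~\ref{ftwo} would not close the argument.

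The paper handles this case differently. After showing (by minimality) that $z=c_0$ must become newly bad in $\S'$, with one neighbor $a'$ in the $R_2a$-portion and one neighbor $b'$ in the $\nex_{R_1}(x)R_1$-portion of $R_2'$, it performs a \emph{second} swap: set $z''=\nex_{R_2'}(a')$ and replace $R_2',R_3$ by $R_2''=R_2'a'\,z\,R_3$ and $R_3''=z''R_2'$. A direct check using Lemmas~\ref{cross} and~\ref{crosss} then shows that none of the three initial vertices of $\S''$ is bad, contradicting the minimality of $\S$. This iterated-swap step is what your plan is missing.
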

\begin{proof} {
Let $F=\{x,y,z\}$ be an $F(G)$-set and $\S=\{R_1,R_2,R_3\}$ be a 
 chain set corresponding to it having the minimum possible of bad vertices 
with respect to all possible choices of $F$ and $\S$, where $x,y$, and $z$ are respectively 
the initial vertices of $R_1,R_2$, and $R_3$. In what follows, completing the proof, 
we will show that $G$ has no bad vertex with respect to $\S$. In view of Lemma~\ref{initialbad}, 
the only possible bad vertices are $x,y$, and $z$. 
Suppose that $x$ is bad with respect to $\S$. 
For simplicity of notation, assume that $x$ has two non-consecutive neighbors $a$ and $b$ on $R_2$, where $a<_{R_2}b$ (see Figure~\ref{f2}). 
Set $x'=\nex_{R_2}(a)$ and define $F'=(F\setminus\{x\})\cup\{x'\}$ and 
$\S'=(\S\setminus\{R_1,R_2\})\cup\{R'_1,R'_2\}$, where   $R'_1=x'R_2$ and 
$R'_2=R_2axR_1$. One can check that $\S'$ is  a chain set corresponding to $F'$. 
Note that, in view of Lemma~\ref{initialbad}, the possible bad vertices with respect to $\S'$ are $x',y$, and $z$. 
\begin{claim}
The vertex $z$ is~not bad with respect to $\S$ and is bad with respect to $\S'$. 
\end{claim}
\noindent{\bf Proof of Claim.}
In view of how we chose $F$ and $\S$, the number of bad vertices in $G$ with respect to $\S'$ can~not be less than 
the number of those in $\S$. 
Therefore, to fulfill the claim, it suffices to prove the two following items. 
\begin{itemize}
\item[{\bf I)}] {\bf The vertex $x'$ is~not bad with respect to $\S'$.} 
For a contradiction, suppose that $x'$ is bad with respect to $\S'$. 
%Note that $x'$  must have a neighbor $c\in R_1$ such that $x<_{R_1} c$. 
Since $x'$ has exactly one neighbor on $R'_1$ and $a\in R'_2$ is adjacent to $x'$,
the vertex $x'$ has two non-consecutive neighbors in $R'_2$, say $a$ and $c$, where $x<_{R'_2}c$.
But, in view of  Lemma~\ref{cross}, this is~not possible since the $x<_{R_1}c$, $x'<_{R_2}b$ and $xb, x'c\in E(G)$.
%%%%%%%%%%%%%%%%%%%%%%%%%%%%%%%%%%%%%%%%%%%%%%%%%%%%
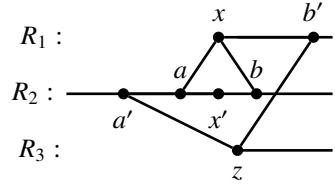
\begin{figure}
\centering
\begin{tikzpicture} [scale=1,very thick] 
\draw[line width=1pt] (0.75,0.75) -- (2,0.75);
\draw[line width=1pt] (-1.5,0) -- (2,0);
\draw[line width=1pt] (0.75,-0.75) -- (2,-0.75);
\vertex[fill] (a1) at (0,0) [label=above:$a$]{};
\vertex[fill] (a2) at (0.5,0.75) [label=above:$x$][label=left:$R_{1}:\qquad\qquad\quad\:$]{};
\vertex[fill] (a3) at (0.75,-0.75) [label=below:$z$][label=left:$R_{3}:\qquad\qquad\qquad$]{};
\vertex[fill] (a4) at (1.75,0.75) [label=above:$b^{\prime}$]{};
\vertex[fill] (a5) at (0.5,0) [label=below:$x'$]{};
\vertex[fill] (a6) at (1,0) [label=above:$b$]{};
\vertex[fill] (a7) at (-0.75,0) [label=below:$a'$][label=left:$R_{2}:\quad\quad$]{};
\path [line width=1pt]
(a1) edge (a2)
(a1) edge (a5)
(a1) edge (a7)
(a2) edge (a4)
(a2) edge (a6)
(a3) edge (a7)
(a3) edge (a4);
\end{tikzpicture}
\caption{\small A drawing of $ \S $ in Lemma~\ref{nobadlemma} with assumption that $ x $ of chain $ R_{1} $ is adjacent to two non-consecutive vertices $ a $ and $ b $ of chain $ R_{2} $; and $z$ of chain $ R_{3} $ is adjacent to the vertices $ a' $ and $ b' $ of the chains $ R_{1} $ and $ R_{2} $ respectively.}\label{f2}
\end{figure}
%%%%%%%%%%%%%%%%%%%%%%%%%%%%%%%%%%%%%%%%%%%%%%%%%%%%

\item[{\bf II)}] {\bf If $y$ is bad with respect to $\S'$, then it is also bad with respect to $\S$.} 
Since $R_2$, $R'_1$, and $R'_2$ are induced paths in $G$ and $y$ is the initial vertex of $R_2$ and $R'_2$, 
in view of the definition of $R'_1$ and $R'_2$, 
the vertex $y$ has at most one neighbor in $R'_1$. 
Hence, if $y$ is bad with respect to $\S'$, then it must have two non-consecutive neighbors 
in $R_3$ which implies that it is bad with respect to $\S$ as well. \hfill{$\Box$}\\
\end{itemize}
Hence, in view of the aforementioned claim, $z$ must have two non-consecutive neighbors on $R'_2$, one located 
in $R_2a$, say $a'$, and the other located in $\nex_{R_1}(x)R_1$, say $b'$ (see Figure~\ref{f2}). 
Note that, in view of Lemma~\ref{crosss}, there is no edge $uv$ such that $u<_{R_2}b$ and $z<_{R_3}v$.
In particular, $x'$ has no neighbor in $R_3$. 
Now, we define $F''=(F'\setminus\{z\})\cup\{z''\}$ and $\S''=(\S'\setminus\{R_2',R_3\})\cup\{R''_2,R''_3\}$, 
where $z''=\nex_{R'_2}(a')$,
$R''_2=R'_2a'zR_3$, and $R''_3=z''R'_2$. One can check that $\S''$ is a chain set corresponding to $F''$.
\begin{claim}
The graph $G$ has no bad vertex with respect to $\S''$. 
\end{claim}
Note that $F''=\{x',y,z''\}$ where $x',y,$ and $z''$ are respectively the initial vertices of $R'_1,R''_2$ and $R''_3$. 
In view of Lemma~\ref{initialbad}, the possible bad vertices with respect to $\S''$ are $x',y$, and $z''$. 
\begin{itemize}
\item[{\bf I)}] {\bf The vertex $x'$ is~not bad with respect to $\S''$.} 
If $x'$ is bad with respect to $\S''$, then 
since $x'$ is~not bad with respect to $\S'$, the path $R'_1$ is an induced non-trivial path, and $\Delta(G)\leq 3$, 
the vertex $x'$ must have two non-consecutive neighbors on $R''_2$. 
This implies that $y=a=a'$ and $z<_{R_3}d$, 
where $d\in R''_2$ is the other neighbor of $x'$. But we already knew that $x'$ can~not have a neighbor succeeding $z$ on $R_3$. 

\item[{\bf II)}] {\bf The vertex $z''$ is~not bad with respect to $\S''$.} 
If $z''$ is bad with respect to $\S''$, then it is easy to see that $a\neq a'$ and $z''$ has a neighobr in $R_3$ succeeding $z$.
This is~not possible since  $z''<_{R_2}b$ and, in view of Lemma~\ref{crosss}, we know that $z''$ cannot have a neighbor succeeding $z$ on $R_3$

\item[{\bf III)}] {\bf The vertex $y$ is~not bad with respect to $\S''$.} 
Since $R_2$ and $R'_2$ are induced paths, $y$ has at most one neighbor in each of $R''_3$ and $R'_1$. \hfill{$\Box$}\\
\end{itemize}
 
Therefore, the bad vertices with  respect to $\S''$ are less than of the bad vertices with respect to $\S$ which is impossible. 
}\end{proof}

\begin{lemma}\label{cons} 
Let $G$ be a graph with $ \Delta (G) \leqslant 3$ and $F(G)=3$. There is an $F(G)$-set $F$ 
and a chain set $\S$ corresponding to $F$ such that if a vertex $x$ of a 
non-trivial chain $R_1\in \S$ is adjacent to two vertices $y$ and $z$ of another chain $R_2\in \S$, 
then $x$ is either the initial or the end vertex of $R_1$ and $y,z$ are consecutive in $R_2$. 
\end{lemma}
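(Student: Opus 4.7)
The plan is to essentially invoke Lemma~\ref{nobadlemma} and then deal with the location of $x$ in $R_1$ by a degree argument.

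First I would select the $F(G)$-set $F$ and the chain set $\S$ guaranteed by Lemma~\ref{nobadlemma}, so that there is no bad vertex with respect to $\S$. Now suppose $x \in R_1$ (with $R_1$ non-trivial) has two distinct neighbors $y, z$ in some other chain $R_2 \in \S$. Since $x$ has at least two neighbors in $R_2$, necessarily $|V(R_2)| \geq 2$, so $R_2$ is also non-trivial. Thus if $y$ and $z$ were non-consecutive in $R_2$, then $x$ would be a bad vertex with respect to the two non-trivial chains $R_1$ and $R_2$, contradicting the choice of $\S$. Therefore $y$ and $z$ must be consecutive in $R_2$.

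Next, to show that $x$ is either the initial or the end vertex of $R_1$, I would use the degree bound. If $x$ were an interior vertex of the path $R_1$, it would have two neighbors inside $R_1$ (namely $\prev_{R_1}(x)$ and $\nex_{R_1}(x)$). Together with the two neighbors $y, z$ in $R_2$, the degree of $x$ in $G$ would be at least $4$, contradicting $\Delta(G) \leq 3$. Hence $x$ is an endpoint of the induced path $R_1$, i.e.\ either its initial or end vertex.

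There is no genuine obstacle here beyond verifying that the bad-vertex machinery applies: the only thing to check carefully is that both chains $R_1, R_2$ in the definition of bad vertex are non-trivial, but this is automatic once $x$ has two neighbors in $R_2$. So the lemma follows as an immediate corollary of Lemma~\ref{nobadlemma} together with the degree constraint $\Delta(G) \leq 3$.
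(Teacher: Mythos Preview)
Your proposal is correct and follows essentially the same approach as the paper's proof: invoke Lemma~\ref{nobadlemma} to get a chain set with no bad vertices, use $\Delta(G)\leq 3$ to force $x$ to be an endpoint of $R_1$, and observe that non-consecutive neighbors in $R_2$ would make $x$ bad. You are in fact slightly more careful than the paper in explicitly noting that $R_2$ must be non-trivial (since it contains two distinct vertices $y,z$), which is needed for the definition of a bad vertex to apply.
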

\begin{proof}{
Let $F$ be an $F(G)$-set and $\S$ be a chain set corresponding to it 
whose existences are ensured by Lemma~\ref{nobadlemma}.
We claim that $F$ and $S$ satisfy the assertion of the lemma. To prove it, suppose that 
$x$ is a vertex of a non-trivial chain $R_1$ which is adjacent to two vertices $y$ and $z$ of another chain $R_2$. Since $\Delta(G)\leq 3$, the vertex $x$ is either the initial or the end vertex of $R_1$. To complete the proof, we need to show that $y$ and $z$ are consecutive in $R_2$. For a contradiction, assume that this is~not the case. Consequently, $x$ must be a bad vertex with respect to $\S$, which is~not possible.
}\end{proof}

Let $F$ be an $F(G)$-set of a graph $G$ and $\S$ a chain set corresponding to it. 
A vertex $x$ is called {\it unfavorite} with respect to $\S$ if there are three non-trivial chains $R_1,R_2,R_3 \in \S$ such that $x \in R_1$ has two neighbors 
$a\in R_2$ and $b\in R_3$ and there is a segment $cd$ such that 
$a<_{R_2} c$, and $d<_{R_3} b$ (see Figure~\ref{f3}).

\begin{lemma}\label{unfavorite}
Let $G$ be a graph with $\Delta(G)\leq 3$, $F$ an $F(G)$-set, and $\S$ a chain set corresponding to $F$. 
Everey unfavorite vertex with respect to $\S$ is an initial vertex of a chain in $\S$.  
\end{lemma}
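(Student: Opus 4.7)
The plan is to argue by contradiction, in the same spirit as the proof of Lemma~\ref{initialbad}, but using the extra segment $cd$ coming from the definition of an unfavorite vertex to strengthen the delay argument via Lemma~\ref{cross1}. Suppose $x$ is unfavorite with respect to $\S$ but is \emph{not} the initial vertex of its chain $R_1$, so $\prev_{R_1}(x)$ exists. By definition there is a neighbor $a\in R_2$ and a neighbor $b\in R_3$ of $x$, together with a segment $cd$ where $c\in R_2$, $d\in R_3$, $a<_{R_2}c$, and $d<_{R_3}b$. Since $\Delta(G)\le 3$ and $x$ already has three distinct neighbors $\prev_{R_1}(x), a, b$, it has no further neighbor; in particular $\nex_{R_1}(x)$ does not exist, so $x$ is the last vertex of the (necessarily non-trivial) chain $R_1$.

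Next I would establish that $b$ is colored at least two steps after $\nex_{R_2}(a)$. Along the chain $R_2$, coloring times are strictly increasing, so $k_F(c)\ge k_F(\nex_{R_2}(a))$. Applying Lemma~\ref{cross1} to $R_3$ with $d<_{R_3}b$ and $c\in N(d)\setminus V(R_3)$ gives $k_F(c)<k_F(b)$. Setting $t:=k_F(\nex_{R_2}(a))-1$, we conclude $k_F(b)>k_F(c)\ge t+1$, so $b$ is still uncolored at the end of step $t+1$, and in particular at the end of step $t$.

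To finish, I would examine the state at the end of step $t$. Since $a$ forces $\nex_{R_2}(a)$ at step $t+1$, every neighbor of $a$ other than $\nex_{R_2}(a)$, including $x$, is already colored by the end of step $t$. Moreover $\prev_{R_1}(x)$ is colored (it forced $x$), while $b$ is not, by the previous paragraph. Thus at the end of step $t$ the only non-colored neighbor of $x$ is $b$, so $x$ forces $b$ at step $t+1$, yielding $k_F(b)\le t+1=k_F(\nex_{R_2}(a))$. This contradicts $k_F(b)>k_F(\nex_{R_2}(a))$, completing the proof. I expect the only subtlety to be noticing that the segment $cd$ is exactly what one needs to push $k_F(b)$ strictly more than one step past $k_F(\nex_{R_2}(a))$, which in turn is precisely what prevents $x$ from being free to force $b$ before $b$ is supposed to be colored inside $R_3$.
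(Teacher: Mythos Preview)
Your argument is correct and follows essentially the same strategy as the paper's proof: assume $x$ is the end vertex of its chain, use the segment $cd$ together with Lemma~\ref{cross1} to push $k_F(b)$ strictly past a certain step, and then observe that $x$ would already be able to force $b$ by that step, a contradiction. The only cosmetic difference is that the paper works directly with the inequalities $k_F(x),k_F(a)<k_F(c)<k_F(b)$ and the bound $k_F(b)\le \max\{k_F(x),k_F(a)\}+1$, whereas you introduce the auxiliary anchor $t=k_F(\nex_{R_2}(a))-1$ and reason about the state of the process at the end of step~$t$; the two formulations are equivalent.
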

\begin{proof}{
Let $x$ be unfavorite with respect to $\S$. For simplicity of notation, assume that $x \in R_1$ has two neighbors 
$y\in R_2$ and $z\in R_3$ where $R_1$,$R_2$ and $R_3$ are distinct non-trivial chains and there is a segment $ab$ such that 
$y<_{R_2} a$, and $b<_{R_3} z$. Since $\Delta(G)\leq 3$ and $x$ has two neighbors on $R_2$ and $R_3$, 
the vertex $x$ msut be either the initial or the end vertex of $R_1$. For a contradiction, assume that $x$ is the end vertex of $R_1$. 
It is clear that $k_F(z)\leq \max\{k_F(x),k_F(y)\}+1$.  Moreover, in view of Lemma~\ref{cross1}, we have 
$k_F(a)<k_F(z)$ and  $k_F(x)<k_F(a)$ which implies 
$$k_F(y)<k_F(a)<k_F(z) \qquad\mbox{and}\qquad k_F(x)<k_F(a)<k_F(z).$$
Accordingly, $k_F(z)\geq\max\{k_F(x),k_F(y)\}+2$, a contradiction.
}\end{proof}

\begin{lemma}\label{unless} 
Let $G$ be a graph with $ \Delta (G) \leq 3 $ and $ F(G)=3$. There are an $F(G)$-set $F$ and a 
chain set $\S$ corresponding to $F$ satisfying Lemma \ref{cons} containing no unfavorite vertex. 
\end{lemma}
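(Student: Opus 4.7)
The plan is to adapt the minimality argument of Lemma~\ref{nobadlemma} to the unfavorite parameter. Invoke Lemma~\ref{cons} to obtain an $F(G)$-set $F$ and a chain set $\S=\{R_{1},R_{2},R_{3}\}$ with no bad vertex; among all such pairs $(F,\S)$, choose one that also minimizes the number of unfavorite vertices with respect to $\S$. I would then argue by contradiction that this minimum must be zero.

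Suppose some vertex $x$ is unfavorite in $\S$. By Lemma~\ref{unfavorite}, $x$ is the initial of its chain, say $R_{1}$, with neighbors $a\in R_{2}$ and $b\in R_{3}$ and a witnessing segment $cd$ satisfying $c\in R_{2}$, $d\in R_{3}$, $a<_{R_{2}}c$ and $d<_{R_{3}}b$. In particular $a'=\nex_{R_{2}}(a)$ exists. In direct parallel with Lemma~\ref{nobadlemma}, perform the swap $F'=(F\setminus\{x\})\cup\{a'\}$ and $\S'=\{R'_{1},R'_{2},R_{3}\}$ where $R'_{1}=a'R_{2}$ and $R'_{2}=R_{2}a\,xR_{1}$. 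One verifies that $\S'$ is a valid chain set for $F'$: with $a'$ initially colored, when $a$ would have forced $a'$ in the original process it may instead force $x$, and the rest of $R_{1}$ is traversed exactly as before.

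The main task is then to check that $\S'$ still has no bad vertex (so it satisfies Lemma~\ref{cons}) and that it has strictly fewer unfavorite vertices than $\S$. By Lemmas~\ref{initialbad} and~\ref{unfavorite}, the only candidates for new bad or unfavorite vertices in $\S'$ are the three initials: $a'$, the initial $y$ of $R_{2}$ (still the initial of $R'_{2}$), and the initial $z$ of $R_{3}$. Since $\Delta(G)\leq 3$ and $a$ is a neighbor of $a'$ that now lies in $R'_{2}$, the vertex $a'$ has at most one neighbor outside $R'_{1}$ and so cannot be bad; if this extra neighbor lies in $R_{3}$, Lemma~\ref{crosss} applied to the original segment $cd$ rules out any crossing segment that would make $a'$ unfavorite. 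For $y$ and $z$, the neighborhoods are unchanged, and an alleged new bad or unfavorite witness in $\S'$ would translate back, via Lemmas~\ref{cross1} and~\ref{crosss}, to an analogous witness in $\S$, contradicting our minimal choice. Since $x$ is now internal to $R'_{2}$ it is no longer unfavorite, yielding the required strict decrease.

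The principal obstacle is the case analysis implicit in the previous paragraph: one must examine how the edges $xa$, $xb$, $cd$, and any putative crossing witnesses in $\S'$ interact, and rule each of them out using Lemmas~\ref{cross}, \ref{cross1}, and~\ref{crosss}. When the swap at $a$ only shifts the unfavorite configuration onto $z$ rather than eliminating it, a second symmetric swap on $R_{3}$ at $d$ (replacing $z$ by $\nex_{R_{3}}(d)$) repairs the situation, in exact analogy with the nested swap performed in the second half of the proof of Lemma~\ref{nobadlemma}. After at most these two swaps, the positions of the initials are consistent with a planar segment drawing, and the three crossing lemmas jointly forbid any remaining bad or unfavorite vertex.
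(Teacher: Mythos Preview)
Your strategy is the same as the paper's: start from a pair $(F,\S)$ satisfying Lemma~\ref{cons} with the fewest unfavorite vertices, assume $x$ is unfavorite, and perform the swap $x\mapsto a'=\nex_{R_2}(a)$, $\S\mapsto\S'=\{R'_1,R'_2,R_3\}$.

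Two points of comparison are worth noting. First, the paper actually proves the stronger fact that after this \emph{single} swap $\S'$ already has \emph{zero} unfavorite vertices, so your proposed second swap is unnecessary (and, as you describe it, the choice of $\nex_{R_3}(d)$ is not well motivated, since $d$ is a witness for $x$'s unfavoriteness and need not be a neighbor of $z$). Second, your ``translate back'' argument for $y$ and $z$ is not quite the right mechanism. For $z$ the paper observes that, by Lemma~\ref{cross} applied to the edge $xb$, the vertex $z$ has no neighbor in $R_1$; hence any two neighbors of $z$ lying in $R'_1\cup R'_2$ both come from the old $R_2$, and if $z$ were unfavorite in $\S'$ those two neighbors would be non-consecutive in $R_2$, making $z$ \emph{bad} (not unfavorite) in $\S$ and contradicting Lemma~\ref{cons} directly. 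A similar chain-specific analysis handles $y$ (forcing $y=a$) and $a'$. Also, your claim that $a'$ has ``at most one neighbor outside $R'_1$'' is off by one: $a'$ has $a\in R'_2$ and possibly one further neighbor outside $R'_1$; the content is that this extra neighbor cannot lie on $R_1$ beyond $x$, which is what the crossing lemmas give.
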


%\begin{lemma}\label{unless} 
%Let $G$ be a connected graph with $ \Delta (G) \leq 3 $ and $ F(G)=3$. There is an $F(G)$-set $F$ and a 
%chain set $S$ corresponding to $F$ satisfying Lemma \ref{cons} such that if a vertex $x$ of a non-trivial chain $R_1$ 
%is adjacent to two vertices $y\in R_2$ and $z\in R_3$, where $R_1,R_2,$ and $R_3$ are distinct, then the vertices of 
%the sets $\{u\in R_2\colon y<_{R_2} u \}$ and $\{v\in R_3\colon v<_{R_3} z\}$ are~not adjacent to each other. 
%\end{lemma}
\begin{proof}{ 
Let $F=\{x,y,z\}$ be an $F(G)$-set and $\S=\{R_1,R_2,R_3\}$ a 
chain set corresponding to it whose existences are ensured by Lemma~\ref{cons} 
having the minimum possible number of unfavorite vertices, where $x,y$, and $z$ are respectively 
the initial vertices of $R_1,R_2$, and $R_3$. In what follows, completing the proof, we will show that $G$ has no unfavorite vertex with respect to $\S$. In view of Lemma~\ref{unfavorite}, the only possible unfavorite vertices are $x,y$ and $z$. For a contradiction suppose that $x$ is unfavorite. For simplicity of notation, assume that $x$ has two neighbors $a\in R_2$ and $b\in R_3$ and there is a segment $cd$ such that $a<_{R_2}c$ and $d<_{R_3}b$ (see Figure~\ref{f3}). In view of Lemma~\ref{cross1}, $k_F(c)<k_F(b)$ which implies that $k_F(a)<k_F(c)<k_F(b)$. Now, set $x'=\nex_{R_2}(a)$ and define $F'=(F\setminus \{x\})\cup \{x'\}$ and 
$\S'=(\S\setminus\{R_1,R_2\})\cup\{R'_1,R'_2\}$, where $R'_1=x'R_2$ and $R'_2=R_2axR_1$. 
One can check that $\S'$  is a chain set corresponding to $F'$.
Note that $F'=\{x',y,z\}$ where $x',y,$ and $z$ are respectively the initial vertices of $R'_1,R'_2,R_3$. 

%%%%%%%%%%%%%%%%%%%%%%%%%%%%%%%%%%%%%%%%%%%%%%%%%%%%
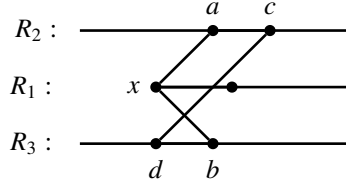
\begin{figure}
\centering
\begin{tikzpicture} [scale=1,very thick] 
\draw[line width=1pt] (-1,0.75) -- (2.5,0.75){};
\draw[line width=1pt] (0,0) -- (2.5,0){};
\draw[line width=1pt] (-1,-0.75) -- (2.5,-0.75){};
\vertex[fill] (a1) at (0,0) [label=left:$x$][label=left:$R_{1}:\qquad\;\quad$]{};
\vertex[fill] (a2) at (0.75,0.75) [label=above:$a$][label=left:$R_{2}:\qquad\qquad\quad\:$]{};
\vertex[fill] (a3) at (0.75,-0.75) [label=below:$b$]{};
\vertex[fill] (a6) at (1,0) []{};
\vertex[fill] (a4) at (1.5,0.75) [label=above:$c$]{};
\vertex[fill] (a5) at (0,-0.75) [label=below:$d$][label=left:$R_{3}:\qquad\;\quad$]{};
\path [line width=1pt]
(a1) edge (a2)
(a1) edge (a3)
(a1) edge (a6)
(a2) edge (a4)
(a3) edge (a5)
(a4) edge (a5)
;
\end{tikzpicture}
\caption{A drawing of $ \S $ in Lemma~\ref{unless}, with assumption that $ x $ of $ R_{1} $ is adjacent to the vertices $ a $ and $ b $ of $ R_{2} $ and $ R_{3} $ respectively, such that $c$ of $ R_{2} $, where $ c >_{R_2}a $, is adjacent to the vertex $ d $ of $ R_{3} $, where $d <_{R_3} b.$}\label{f3}
\end{figure}

%%%%%%%%%%%%%%%%%%%%%%%%%%%%%%%%%%%%%%%%%%%%%%%%%%%%

\begin{claim}
$F'$ and $\S'$ satisfy Lemma~\ref{cons}. 
\end{claim}
\noindent{\bf Proof of Claim.}
In view of Lemma~\ref{initialbad} , to prove the assertion we need to show that $x',y$, and $z$ are not bad vertices 
with respect to $\S'$
\begin{itemize}

\item[{\bf I)}] {\bf The vertex $x'$ is~not bad with respect to $\S'$.} 
If $x'$ is bad with respect to $\S'$ then, by definition of bad vertices, $R'_1$ is not trivial and also $x' \neq c$ (otherwise $x'=c$ has one neighbor in each chain). 
Therefore, since $x'$ is adjacent to $a\in R'_2$ and $R'_2$ is an induced path, $x'$ must have two non-consecutive neighbors
$a$ and $e$ in $R_2'$ such that $e\in R_1$ and $x<_{R_1} e$. On the other hand, by Lemma~\ref{crosss}, $x'$ has no neighbor succeeding $x$ on $R_1$, a contradiction. 

\item[{\bf II)}] {\bf The vertex $z$ is~not bad with respect to $\S'$.} 
If $z$ is bad with respect to $\S'$, then, since $z$ is~not bad with respect to $\S$, 
it is easy to see that $z$ must have a neighobr in $R_1$ succeeding $x$.
By the Lemma~\ref{cross}, this is~not possible since  $xb\in E(G)$. 

\item[{\bf III)}] {\bf The vertex $y$ is~not bad with respect to $\S'$.} 
If $y$ is bad with respect to $\S'$, then since $R_2$ and $R'_2$ are both induced paths, $y$ must have two non-consecutive neighbors in $R_3$ implying that $y$ is bad with respect to $\S$ as well, a contradiction. \hfill{$\Box$}\\
\end{itemize}
 
By the next claim,  we will prove that $G$ has no unfavorite vertex with respect to $\S'$. 

\begin{claim}
The graph $G$ has no unfavorite vertex with respect to $\S'$. 
\end{claim}
\noindent{\bf Proof of Claim.}
In view of Lemma~\ref{unfavorite} , to prove the assertion we need to show that $x',y$, and $z$ are not unfavorite vertices respect to $\S'$
\begin{itemize}

\item[{\bf I)}] {\bf The vertex $x'$ is~not unfavorite with respect to $\S'$.} 
By Lemma~\ref{cross}, $x'$ cannot have any neighbor succeeding $d$. So, if $x'$ is unfavorite with respect to $\S'$, then 
it has two neighbors $a$ and $a'$, where $a'\in R_3$ and $a'\leq_{R_3} d$. 
Since $x'$ is unfavorite with respect to $S'$, one can see that there must be a segment $uv$ such that $u<_{R_1} a$ and $a'<_{R_3}v$ or a segment $u'v'$ such that $x<_{R_3}u'$ and $v'<_{R_3}a'$ which is~not possible due to Lemma~\ref{cross} (using this lemma for original $S$). 

\item[{\bf II)}] {\bf The vertex $z$ is~not unfavorite with respect to $\S'$.} 
In view of Lemma~\ref{cross}, $z$ has no neighbor in $R_1$. Therefore, 
if $z$ is unfavorite with respect to $\S'$, then it is easy to see that $z$ has two non-consecutive neighbors in $R_2$ implying that $z$ is bad with respect to $\S$, a contradiction to the assumption that $F$ and $\S$ satisfy Lemma~\ref{cons}. 

\item[{\bf III)}] {\bf The vertex $y$ is~not unfavorite with respect to $\S'$.} 
Since $R_2$ is an induced path, if $y$ is unfavorite, then clearly $y=a$. 
Otherwise, if $y\neq a$, then $y$ has no neighbor on $R'_1$ and consequently, is not unfavorite. 
Let the two neighbors of $y$ be $x'\in R'_1$ and $w\in R_3$. Now, in view of Lemma~\ref{cross}, one can see that no vertex 
in $R'_1$ is adjacent to a vertex of $R_3$ preceding $w$, a contradiction to the assumption that $y$ is unfavorite. \hfill{$\Box$}\\
\end{itemize}
Therefore, the number of unfavorite vertices with  respect to $\S'$ is less than the number of those with respect to $\S$ which is impossible.  
}\end{proof}
\section{A sufficient condition for being of $3$-parallel paths}
Let $G$ be a graph with $\Delta(G)\leq 3$ whose vertex set can be partitioned into three induced vertex disjoint paths $P^{1},P^{2},P^{3}$ 
satisfying the following properties: 
\begin{enumerate}
\item Each of $P^{1}$ and $P^{2}$ has at least two vertices
\item If $P^{3}$ is a singleton, then its degee is at most two.
\item For any edge $xy$ with endpoints $x$ and $y$ in different chains $P^{i}$ and $P^{j}$ respectively, there is no edge $x'y'$ 
such that $x'\in P^{i}, y'\in P^{j}$, and $x<_{P^{i}}x'$ and $y'<_{P^{j}}y$,
\item For $\{i,j,k\}=[3]$, 
if there are two edges $ab$ and $cd$ such that $a\in P^{i}, b,c\in P^{j}$, $d\in P^{k}$, and $c<_{P^{j}}b$, then there is no edge $xy$ such that $a<_{P^{i}}x$ and $y<_{P^{k}}d$.
\item For each $i\neq j\in[3]$, there is no vertex $v\in V(P^{i})$ having two non-consecutive neighbors in $P^{j}$. 
\item If $\{i,j,k\}=[3]$, then for each vertex $x \in P^{i}$ having two neighbors 
$a\in P^{j}$ and $b\in P^{k}$, there is no edge $cd$ such that 
$a<_{P^{j}} c$, and $d<_{P^{k}} b$. 
\end{enumerate} 

Since $\Delta(G)\leq 3$, by Property~5, if a vertex of $P^{1}$ (resp. $P^{2}$) has two neighbors in $P^{2}$ (resp. $P^{1}$), then 
this vertex is either the initial or the end vertex of $P^{1}$ (resp. $P^{2}$) and these neighbors are 
consecutive on $P^{2}$ (resp. $P^{1}$). Moreover, by the third property, at most one of the initial (resp. end) vertices of $P^{1}$ and $P^{2}$ can have two consecutive neighbors in another chain. 
If a vertex $u$ of $P^{1}$ (resp. $P^{2}$) has  two neighbors $v_1,v_2$ in $P^{2}$ (resp. $P^{1}$), then we glue these two consecutive neighbors together to have a thick vertex $\mathbf{v}$ and a thick edge between $u$ and $\mathbf{v}$. 
Now, we draw the two chains $P^{1}$ and $P^{2}$ as two parallel horizontal lines, if we forget their vertices, 
where $P^{1}$ is located above $P^{2}$ and the edges between them are vertical segments. 
Note that, in view of Property~3, this drawing has no interruption. 
We call this drawing a {\it ladder drawing} of  $P^{1}$ and $P^{2}$, see Figures~\ref{f4} and~\ref{f5}. 

%%%%%%%%%%%%%%%%%%%%%%%%%%%%%%%%%%%%%%%%%%%%%%%%%%%%
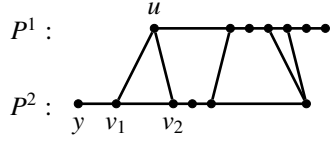
\begin{figure}
\centering
\begin{tikzpicture}
\draw[line width=1pt] (0,0) -- (2.25,0);
\draw[line width=1pt] (-1,-1) -- (2,-1);
\vertex[fill] (a1) at (0,0) [label=above:$u$][label=left:$P^{1}:\quad\quad\quad\;$]{};
\vertex[fill] (a2) at (-1,-1) [label=below:$y$][label=left:$P^{2}:\:\:$]{};
\vertex[fill] (a3) at (-0.5,-1) [label=below:$v_1$][]{};
\vertex[fill] (a4) at (0.25,-1) [label=below:$v_2$][]{};
\vertex[fill] (a5) at (0.75,-1){};
\vertex[fill] (a6) at (1,0) {};
\vertex[fill] (a7) at (1.25,0) {};
\vertex[fill] (a8) at (1.5,0) {};
\vertex[fill] (a9) at (1.75,0) {};
\vertex[fill] (a10) at (2,0) {};
\vertex[fill] (a11) at (2,-1) {};
\vertex[fill] (a12) at (0.5,-1) {};
\vertex[fill] (a13) at (2.25,0) {};
\path [line width=1pt]
(a1) edge (a4)
(a1) edge (a3)
(a11) edge (a9)
(a5) edge (a6)
(a11) edge (a8);
\end{tikzpicture}
\caption{A drawing of $ P^{1} $ and $P^{2}$ with assumption that $u$ of $P^{1}$ has two neighbors $v_1,v_2$ in $P^{2}$.}
\label{f4}
\end{figure}
%%%%%%%%vs%%%%%%%%%%%%%%%%%%%%%%%%%%%%%%%%%%%%%%%%%%%%

%%%%%%%%%%%%%%%%%%%%%%%%%%%%%%%%%%%%%%%%%%%%%%%%%%%%
\begin{figure}
\centering
\begin{tikzpicture}
\tikzstyle{vertex2}=[circle, draw, inner sep=0pt, minimum size=4pt]
\draw[line width=1pt] (0,0) -- (2.25,0);
\draw[line width=1pt] (-1,-1) -- (1.75,-1);
\vertex[fill] (a1) at (0,0) [label=above:$u$][label=left:$P^{1}:\quad\quad\;\quad\;$]{};
\vertex[fill] (a2) at (-1,-1) [label=below:$y$][label=left:$P^{2}:\:\:\:$]{};
\node[vertex2,fill] (a3) at (0,-1) [label=below:$\mathbf{v}$] {};
\vertex[fill] (a5) at (1,-1){};
\vertex[fill] (a6) at (1,0) {};
\vertex[fill] (a7) at (1.25,0) {};
\node[vertex2,fill] (a9) at (1.75,0) {};
\vertex[fill] (a10) at (2,0) {};
\vertex[fill] (a11) at (1.75,-1) {};
\vertex[fill] (a12) at (0.5,-1) {};
\vertex[fill] (a13) at (2.25,0) {};
\path [line width=1pt]
(a5) edge (a6)[line width=1pt];
\path [line width=1.5pt]
(a11) edge (a9)
(a1) edge (a3);
\end{tikzpicture}
\caption{A ladder drawing of the chains $ P^{1}$ and $P^{2}$  shown in Figure~\ref{f4}.}
\label{f5}
\end{figure}
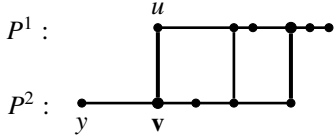
%%%%%%%%vs%%%%%%%%%%%%%%%%%%%%%%%%%%%%%%%%%%%%%%%%%%%%
Also, let $a_1,\ldots,a_m\in P^{1}$ and  
$b_1,\ldots,b_m\in P^{2}$ be the vertices in a ladder drawing of $P^{1}$ and $P^{2}$ such that $a_ib_i$ are 
the segments of this drawing. Note that $a_ib_i$ is drawn as a vertical segment for each $i\in[m]$.
The section $S_i$, where $i \in [m-1]$, corresponding to this ladder drawing, is defined as induced subgraph on 
the vertex set $V_i=\{ v \in V(P^{1}) \cup V(P^{2}) \colon a_i \leq_{P^{1}} v \leq_{P^{1}} a_{i+1} \; or \;  b_i \leq_{P^{2}} v \leq_{P^{2}} b_{i+1} \}$. 
If there is at least one vertex preceding (resp. succeeding) $a_1$ or $b_1$ (resp. $a_m$ or $b_m$) then $S_0$ (resp. $S_m$) is 
defined as induced subgraph on the vertex set $V_0=\{ v \in V(P^{1}) \cup V(P^{2}) \colon v \leq_{P^{1}} a_{1} \; or \;   v \leq_{P^{2}} b_{1} \}$ 
(resp. $V_m= \{ v \in V(P^{1}) \cup V(P^{2}) \colon a_{m} \leq_{P^{1}} v  \; or\;   b_{m} \leq_{P^{2}} v   \}$). 
Also each of the vertices $a_{i}$ and $b_{i}$, where $i \in [m]$, is called a {\it boundary vertex}.

\begin{lemma}\label{pro}
Any graph $G$ satisfying the aforementioned properties has a standard drawing whose 
parallel paths are $P^{1},P^{2},P^{3}$.
\end{lemma}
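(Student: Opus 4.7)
The plan is to construct a standard drawing with parallel paths $P^{1}, P^{2}, P^{3}$ by extending the given ladder drawing of $P^{1}$ and $P^{2}$, placing $P^{3}$ as a third horizontal line below $P^{2}$. Starting from the ladder drawing (whose non-interruption is precisely Property~3 applied to the pair $(P^{1},P^{2})$), I would assign horizontal positions $h(\cdot)$ to the vertices of $P^{3}$ using their external neighbors as anchors. Since $\Delta(G)\leq 3$ and $P^{3}$ is an induced path, each interior vertex of $P^{3}$ has at most one neighbor in $P^{1}\cup P^{2}$, and each endpoint has at most two (a trivial $P^{3}$ has at most two such neighbors by Property~2). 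For a vertex $v\in P^{3}$ with a single external neighbor $w$, set $h(v)=h(w)$ so that $vw$ is drawn essentially vertically. For an endpoint $v$ of $P^{3}$ with external neighbors $a\in P^{1}$ and $b\in P^{2}$, Property~6 forbids any ladder rung $cd$ with column strictly between $h(a)$ and $h(b)$, and Property~5 forbids any additional non-consecutive neighbors, so both segments $va$ and $vb$ can be drawn without colliding with ladder rungs; place $v$ at an intermediate column between $h(a)$ and $h(b)$. Vertices of $P^{3}$ with no external neighbors are inserted at arbitrary intermediate columns respecting the $P^{3}$ path order.

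The heart of the proof is to verify that the $P^{3}$ path order is compatible with these horizontal positions: for consecutive $v_{i}<_{P^{3}} v_{i+1}$ with external neighbors $w_{i}$ and $w_{i+1}$, I must have $h(w_{i})\leq h(w_{i+1})$ so that $P^{3}$ becomes a monotonic horizontal line. When $w_{i}$ and $w_{i+1}$ lie in the same parallel path $P^{\ell}$, Property~3 applied to the pair $(P^{\ell},P^{3})$ with the edges $v_{i}w_{i}$ and $v_{i+1}w_{i+1}$ yields the inequality immediately. In the mixed case $w_{i}\in P^{1}$ and $w_{i+1}\in P^{2}$ (the symmetric case is analogous), I would argue by contradiction: if $h(w_{i})>h(w_{i+1})$, then some ladder rung $a_{j}b_{j}$ separates them, giving $a_{j}<_{P^{1}} w_{i}$ and $w_{i+1}<_{P^{2}} b_{j}$; applying Property~4 with $a_{j}b_{j}$ in the role of $ab$, the edge $v_{i+1}w_{i+1}$ in the role of $cd$ (so $c=w_{i+1}\in P^{2}$ plays the role of the vertex on the middle chain preceding $b_{j}$), and the edge $v_{i}w_{i}$ playing the forbidden $xy$, yields a contradiction.

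The main obstacle is this mixed-case ordering argument, which threads Property~4 together with the separation structure given by the ladder rungs; everything else reduces to book-keeping. Once ordering compatibility is established, the remaining verification that no two segments cross in the final drawing is routine: ladder rungs are vertical and pairwise disjoint by construction; segments between $P^{2}$ and $P^{3}$ are non-crossing by Property~3 on $(P^{2},P^{3})$; segments between $P^{1}$ and $P^{3}$ avoid ladder rungs by Property~4 and are mutually non-crossing by Property~3 on $(P^{1},P^{3})$; and for endpoints of $P^{3}$ with neighbors in both $P^{1}$ and $P^{2}$, Property~6 shields their two outgoing segments from all other segments. Assembling these observations produces the desired standard drawing with parallel paths $P^{1}, P^{2}, P^{3}$.
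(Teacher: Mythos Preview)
Your direct-construction approach is natural, but the mixed-case ordering argument has a real gap. You assert that if $w_i\in P^{1}$, $w_{i+1}\in P^{2}$ and $h(w_i)>h(w_{i+1})$, then ``some ladder rung $a_{j}b_{j}$ separates them'', yielding $a_{j}<_{P^{1}}w_{i}$ and $w_{i+1}<_{P^{2}}b_{j}$. Nothing guarantees such a rung: if $w_{i}$ and $w_{i+1}$ lie in the same section of the ladder (between two consecutive rungs, or before the first rung, or after the last, or when there are no rungs at all), their relative horizontal order is purely an artifact of the particular ladder drawing you started from, and no separating rung exists---so Property~4 cannot be invoked and the contradiction does not close. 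The repair is not local: you would have to show that a ladder drawing can be chosen so that \emph{all} such mixed pairs are correctly ordered simultaneously, and that is its own section-by-section argument. A related issue infects the closing non-crossing check: saying that ``segments between $P^{1}$ and $P^{3}$ avoid ladder rungs by Property~4'' conflates a combinatorial hypothesis with a geometric conclusion, since whether a $P^{3}$--$P^{1}$ segment actually meets a vertical rung depends on the precise column at which it crosses the $P^{2}$ line, which in turn depends on the columns you assigned on $P^{3}$.

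The paper sidesteps both difficulties by inducting on $|V(P^{3})|$ (and placing $P^{3}$ above $P^{1}$ rather than below $P^{2}$, though that is cosmetic). Properties~5 and~6 force the external neighbours of any single vertex of $P^{3}$ to lie in one section $S_{i}$ of the ladder, so that vertex can be drawn above $S_{i}$ without conflict; one then strips the last vertex $z_{n}$ off $P^{3}$, draws the rest by induction, and re-inserts $z_{n}$ by stretching the relevant section. The induction localises the geometric adjustment to one section at a time, which is exactly what your global ordering argument is missing. At the end the paper also splits the thick vertices and edges of the ladder back into ordinary ones---a step your proposal omits.
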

\begin{proof}{ 
Consider a ladder drawing of  $P^{1}$ and $P^{2}$. 
We claim that we can draw $P^{3}$ as a horizontal path located above $P^{1}$ and the 
edges between $V(P^{3})$ and $V(P^{1})\cup V(P^{2})$ as segments such that no two segments have interruption. 
We proceed the proof of this claim by induction on $n=|V(P^{3})|\geq 1$. 
Let $n=1$, i.e., $P^{3}=z$.
The difficult case is the case that $\deg(z)=2$ (the cases $\deg(z)=0,1$ are trivial). 
Let $N(z)=\{u,v\}$. 
By Properties~5 and 6, there is a unique section $S_i$ such that $u,v\in V(S_i)$.  
By putting the vertex $z$ above the section $S_i$ (not above its boundary vertices), 
it is clear that we can draw the two segments $zu$ and $zv$ such these two segments have no interruption with other  
segments proving the claim for $n=1$. 
Now, let $n\geq 2$ and $P^{3}=z_1z_2\ldots z_n$. 
Without loss of generality, we may assume that each $z_i$ has at least one neighbor in $V(P^{1})\cup V(P^{2})$. 
Delete the vertex $z_n$ from $P^{3}$ to obtain the path $P'^{3}$.
By the induction, we can draw $P'^{3}$ as a horizontal path located above $P^{1}$ and the 
edges between $V(P'^{3})$ and $V(P^{1})\cup V(P^{2})$ as segments such that no two segments have interruption. Let $x'$ and $y'$ be respectively the last vertices of $P^{1}$ and $P^{2}$ 
having some neighbor in $P'^{3}$. (By Property~3, $|N(z_{n-1})\cap \{x',y'\}|\geq1$.) Because of similarity, assume that $y'z_{n-1}, x'z_j\in E(G)$.
Let $S_k$ be a section containing the neighbors of $z_n$. (Note that by properties~3,~4 and~6 there is no vertex in $P'^{3}$ having some neighbors in $S_j$, where $k<j$). Move all the vertices located inner the section
$S_k$ and after $x'$ to the right to pass the intersection of segment $yz_{n-1}$ and $P^{1}$. 
Now, stretch the section $S_k$ (we move $a_{k+1}$, $b_{k+1}$ and all vertices after these two vertices 
(if there exist and it is necessary) to the right side so by adding $z_n$ to $P'^{3}$, the edges adjacent to $z_n$ can be drawn in a 
way causing no segment interruption). 
The other case that  $x'z_{n-1}, y'z_j\in E(G)$ can be settled similarly. 
Therefore, the claim is proved via induction. 
To finish the proof of lemma, it suffices to split the the thick edges and vertices in the drawing whose existence is insured by the claim, 
so that no interruption is made, see Figure~\ref{f8}.
}\end{proof}

 %%%%%%%%%%%%%%%%%%%%%%%%%%%%%%%%%%%%%%%%%%%%%%%%%%%%

\begin{figure}
\centering
$$\begin{array}{ccc}
\begin{tikzpicture}
\tikzstyle{vertex2}=[circle, draw, inner sep=0pt, minimum size=4pt]
\draw[line width=1pt] (-1,0) -- (2.25,0);
\draw[line width=1pt] (0,-1) -- (1.75,-1);
\draw[line width=1pt] (-1,1) -- (2.25,1);
\node[vertex2,fill] (a1) at (0,0) [label=left:$P^{1}:\quad\quad\:\quad\;$]{};
\vertex[fill] (a2) at (1.5,-1) {};
\vertex[fill] (a3) at (0,-1)[label=left:$P^{2}:\quad\:\:\quad\quad$]{};
\vertex[fill] (a5) at (1,-1){};
\vertex[fill] (a6) at (1,0) {};
\vertex[fill] (a7) at (1.25,0) {};
\node[vertex2,fill] (a9) at (1.75,0) {};
\vertex[fill] (a10) at (2,0) {};
\vertex[fill] (a11) at (1.75,-1) {};
\vertex[fill] (a12) at (0.5,-1) {};
\vertex[fill] (a13) at (2.25,0) {};
\vertex[fill] (a14) at (-0.25,1) [label=left:$P^{3}:\quad\:\:\;\quad$]{};
\vertex[fill] (a15) at (-0.5,1) {};
\vertex[fill] (a16) at (-0.25,0) {};
\vertex[fill] (a17) at (1.75,1) {};
\vertex[fill] (a18) at (1.5,1) {};
\vertex[fill] (a19) at (1.90,1) {};
\path [line width=1pt]
(a14) edge (a12)[line width=1pt]
(a18) edge (a7)[line width=1pt]
(a15) edge (a16)[line width=1pt]
(a17) edge (a2)[line width=1pt]
(a19) edge (a10)[line width=1pt]
(a5) edge (a6)[line width=1pt];
\path [line width=1.5pt]
(a11) edge (a9)
(a1) edge (a3);
\end{tikzpicture}

%%%%%%%%%%%%%%%%%%%%%%%%%%%%%%%%%%%%%%%%%%%%%%%%%%%%
&\qquad\quad &
\begin{tikzpicture}
\draw[line width=1pt] (-1,0) -- (2.25,0);
\draw[line width=1pt] (0,-1) -- (1.75,-1);
\draw[line width=1pt] (-1,1) -- (2.25,1);
\vertex[fill] (a1) at (0,0) [label=left:$P^{1}:\quad\quad\:\quad\;$]{};
\vertex[fill] (a2) at (1.5,-1) {};
\vertex[fill] (a3) at (0,-1)[label=left:$P^{2}:\quad\:\:\quad\quad$]  {};
\vertex[fill] (a5) at (1,-1){};
\vertex[fill] (a6) at (1,0) {};
\vertex[fill] (a7) at (1.25,0) {};
\vertex[fill] (a9) at (1.75,0) {};
\vertex[fill] (a21) at (1.85,0) {};
\vertex[fill] (a20) at (-0.15,0) {};
\vertex[fill] (a10) at (2,0) {};
\vertex[fill] (a11) at (1.75,-1) {};
\vertex[fill] (a12) at (0.5,-1) {};
\vertex[fill] (a14) at (-0.25,1) [label=left:$ P^{3}:\quad\:\:\;\quad$]{};
\vertex[fill] (a15) at (-0.5,1) {};
\vertex[fill] (a16) at (-0.25,0) {};
\vertex[fill] (a17) at (1.75,1) {};
\vertex[fill] (a18) at (1.5,1) {};
\vertex[fill] (a19) at (1.90,1) {};
\path [line width=1pt]
(a14) edge (a12)[line width=1pt]
(a18) edge (a7)[line width=1pt]
(a15) edge (a16)[line width=1pt]
(a17) edge (a2)[line width=1pt]
(a19) edge (a10)[line width=1pt]
(a9) edge (a11)[line width=1pt]
(a1) edge (a3)[line width=1pt]
(a20) edge (a3)[line width=1pt]
(a18) edge (a19)[line width=1pt]
(a11) edge (a21)[line width=1pt]
(a5) edge (a6)[line width=1pt];

\end{tikzpicture}

\end{array}$$
\caption{The picture on the right is derived from the left picture by splitting any thick edge and vertex into two edges and two vertices so that no interruption is made.}
\label{f8}
\end{figure}
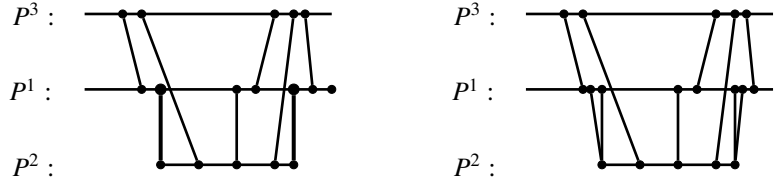

%%%%%%%%vs%%%%%%%%%%%%%%%%%%%%%%%%%%%%%%%%%%%%%%%%%%%% 
 
%%%%%%%%vs%%%%%%%%%%%%%%%%%%%%%%%%%%%%%%%%%%%%%%%%%%%% 
Now, we are in a position to prove the following lemma which somehow unifies all the lemmas in this section.

\begin{lemma}\label{representation}
Let $G$ be a graph with $\Delta(G)\leq 3$ and $ F(G)=3 $. 
There are an $F(G)$-set, a chain set $\S=\{R_1,R_2,R_3\}$ corresponding to it,
and a standard drawing of $G$ whose 
parallel paths are $R_1,R_2,R_3$. 
\end{lemma}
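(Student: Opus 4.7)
The plan is to reduce the lemma to Lemma~\ref{pro} by invoking Lemma~\ref{unless}. I would first apply Lemma~\ref{unless} to obtain an $F(G)$-set $F$ and a chain set $\S=\{R_1,R_2,R_3\}$ corresponding to $F$ that satisfies Lemma~\ref{cons} (so contains no bad vertex) and contains no unfavorite vertex. Then, after a suitable relabeling of $R_1,R_2,R_3$ as $P^{1},P^{2},P^{3}$, I would verify that the six structural properties listed immediately before Lemma~\ref{pro} all hold; Lemma~\ref{pro} would then directly supply the required standard drawing whose parallel paths are these three chains.

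Four of the six properties are essentially automatic: Property~3 is Lemma~\ref{cross} applied to $F$ and $\S$, Property~4 is Lemma~\ref{crosss}, Property~5 is the non-existence of a bad vertex, and Property~6 is the non-existence of an unfavorite vertex. The substance therefore concentrates on Properties~1 and~2, both of which concern trivial (singleton) chains. For Property~1, I would use Observation~\ref{singleton} to deduce that at most two of the three chains of $\S$ are trivial. If at least two chains are non-trivial, I designate them as $P^{1},P^{2}$ and let the remaining chain be $P^{3}$. The residual case, in which only one chain is non-trivial, forces $G$ to be a very restricted graph (a path together with at most two extra vertices whose attachments into the path are tightly controlled by the no-bad condition), and a standard drawing can be written down directly without invoking Lemma~\ref{pro}.

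The main obstacle is Property~2: if $P^{3}=\{z\}$ is a singleton, its degree in $G$ must be at most two. A degree-three singleton $z$ would have all three of its neighbors in $R_1\cup R_2$, and the no-bad condition forbids three neighbors inside a single chain (two of them would then be non-consecutive), so the neighbors must split as $(2,1)$ across the two non-trivial chains. To handle this remaining configuration, I plan to perform a local modification of the forcing/chain decomposition: since $z$ lies in $F$ and its neighbors become colored by the end of step one, the forcing can be re-routed through $z$ so that $z$ itself forces one of its adjacent chain successors, thereby turning $\{z\}$ into the head of a longer chain while a previously chain-initial vertex becomes the new singleton. After checking that this perturbation preserves the no-bad and no-unfavorite properties and, by iterating if necessary, reaching a chain set in which every singleton chain has degree at most two, Properties~1--6 are simultaneously satisfied and Lemma~\ref{pro} yields the desired standard drawing of $G$ whose parallel paths are exactly the three chains.
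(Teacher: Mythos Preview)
Your overall plan---obtain $F$ and $\S$ from Lemma~\ref{unless} and then verify Properties~1--6 so that Lemma~\ref{pro} applies---is natural, but it diverges from the paper's argument and contains a real gap. The paper does not try to push the singleton-chain case through Lemma~\ref{pro}; it splits on the number of trivial chains, invokes Lemma~\ref{pro} only when all three chains are non-trivial, and in the remaining cases constructs a standard drawing directly (for two non-trivial chains it takes the ladder drawing of $R_1,R_2$ and places the singleton $z$ above it, then splits the thick vertices).

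The gap in your argument concerns a singleton chain $P^{3}=\{z\}$. By definition a bad vertex must lie in a \emph{non-trivial} chain, and an unfavorite vertex requires all three chains to be non-trivial. Hence the guarantees coming from Lemma~\ref{unless} say nothing about $z$: they do not prevent $z$ from having two non-consecutive neighbors, or even all three neighbors, in a single chain $R_i$. Your claim that ``the no-bad condition forbids three neighbors inside a single chain'' therefore fails for $z$, so the asserted $(2,1)$ split of $N(z)$ across $R_1,R_2$ is unjustified; more generally, Properties~5 and~6 are \emph{not} automatic once a trivial chain is present. Your proposed remedy---re-routing the forcing so that $z$ heads a longer chain and some other vertex becomes the new singleton---is also incomplete: you give no argument that the modified chain set still satisfies Lemmas~\ref{cons} and~\ref{unless}, nor that the iteration terminates (the new singleton may again have degree~$3$). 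The paper sidesteps all of this by abandoning Lemma~\ref{pro} in the trivial-chain cases and drawing those configurations directly.
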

\begin{proof}{
Let $F=\{x,y,z\}$ be an $F(G)$-set and $\S=\{R_1,R_2,R_3\}$ a  chain set corresponding 
to it whose existence are ensured by Lemma~\ref{unless} such that $\S$ has 
the minimum possible number of non-trivial chains.  We also assume that $x,y,$ and $z$ are respectively 
the initial vertices of chains $R_1,R_2,$ and $R_3$. In what follows, we shall prove that $G$ has a standard drawing whose 
parallel paths are $R_1,R_2,R_3$, so the assertion is proved. 

Note that when we have only one non-trivial chain in $\S$, then 
by drawing the non-trivial chain as a horizontal line, putting the two trivial chains on the different sides of it, 
and drawing the edges as segments, clearly the edges between the chains do not interrupt each other and 
we have a standard drawing of $G$. If the chains in $\S$ are non-trivial then,  
by Lemmas~\ref{cross},~\ref{crosss}, and~\ref{unless}, 
the graph $G$ with paths $R_1,R_2$ and $R_3$ satisfies the conditions stated before Lemma~\ref{pro}. 
Therefore by Lemma~\ref{pro}, it is clear that $G$ has a standard drawing whose parallel paths are $R_1,R_2,R_3$.
In the rest of the proof, we assume that only two chains  in $\S$ are non-trivial.\\

For simplicity, we may assume that the chains $R_1$ and $R_2$ are non-trivial and $R_3$ is trivial. 
Consider a ladder drawing of  $R_1$ and $R_2$, where $R_1$ is located above $R_2$. Note that the chain $R_3$ is just a vertex $z$. 
Without loss of generality, we may assume that the number of neighbors of $z$ in $R_1$ is~not less than those in $R_2$.  
Now, we put $z$ above $R_1$ such that the edge between $z$ and its neighbor in $R_2$ (if exists) is a vertical segment, 
see Figure~\ref{f6}. Now, we can draw two the other edges adjacent to $z$ as straight segments.
To obtain the desired drawing, it suffices to split any thick edge and vertex into two edges and two vertices (backking to the original situation)
so that no interruption is made, see Figure~\ref{f7}. So we proved the assertion.\hfill\\

%%%%%%%%%%%%%%%%%%%%%%%%%%%%%%%%%%%%%%%%%%%%%%%%%%%%
\begin{figure}
\centering
\begin{tikzpicture}
\tikzstyle{vertex2}=[circle, draw, inner sep=0pt, minimum size=4pt]
\draw[line width=1pt] (0,0) -- (2.25,0);
\draw[line width=1pt] (-1,-1) -- (1.75,-1);
\vertex[fill] (a1) at (0,0) [label=above:$x$][label=left:$R_{1}:\quad\quad\;\quad\;$]{};
\vertex[fill] (a2) at (-1,-1) [label=below:$y$][label=left:$R_{2}:\:\:\:$]{};
\node[vertex2,fill] (a3) at (0,-1) [label=below:$\mathbf{v}$] {};
\vertex[fill] (a5) at (1,-1){};
\vertex[fill] (a6) at (1,0) {};
\vertex[fill] (a7) at (1.25,0) {};
\node[vertex2,fill] (a9) at (1.75,0) {};
\vertex[fill] (a10) at (1.5,0) {};
\vertex[fill] (a11) at (1.75,-1) {};
\vertex[fill] (a12) at (-0.5,-1) {};
\vertex[fill] (a13) at (2.25,0) {};
\vertex[fill] (a14) at (-0.5,1) [label=above:$z$][label=left:$R_{3}:\quad\:\quad$]{};
\path [line width=1pt]
(a14) edge (a12)[line width=1pt]
(a14) edge (a7)[line width=1pt]
(a14) edge (a10)[line width=1pt]
(a5) edge (a6)[line width=1pt];
\path [line width=1.5pt]
(a11) edge (a9)
(a1) edge (a3);
\end{tikzpicture}
\caption{A ladder drawing of $ R_1,R_2 $ with trivial chain $R_3:(z)$ above $R_1$ such that the edge between $z$ and its neighbor in $R_2$ (if exists) is a vertical segment.}
\label{f6}
\end{figure}
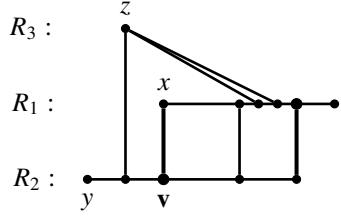
%%%%%%%%vs%%%%%%%%%%%%%%%%%%%%%%%%%%%%%%%%%%%%%%%%%%%%

%%%%%%%%%%%%%%%%%%%%%%%%%%%%%%%%%%%%%%%%%%%%%%%%%%%%
\begin{figure}
\centering
\begin{tikzpicture}
\draw[line width=1pt] (0,0) -- (2.25,0);
\draw[line width=1pt] (-1,-1) -- (1.75,-1);
\vertex[fill] (a1) at (0,0) [label=above:$x$][label=left:$R_{1}:\quad\quad\;\quad\;$]{};
\vertex[fill] (a2) at (-1,-1) [label=below:$y$][label=left:$R_{2}:\:\:\:$]{};
\vertex[fill] (a3) at (-0.25,-1) [label=below:$v_1$][]{};
\vertex[fill] (a5) at (1,-1){};
\vertex[fill] (a6) at (1,0) {};
\vertex[fill] (a7) at (1.25,0) {};
\vertex[fill] (a9) at (1.65,0) {};
\vertex[fill] (a10) at (1.5,0) {};
\vertex[fill] (a11) at (1.75,-1) {};
\vertex[fill] (a12) at (-0.5,-1) {};
\vertex[fill] (a13) at (2.25,0) {};
\vertex[fill] (a14) at (-0.5,1) [label=above:$z$][label=left:$R_{3}:\quad\:\quad$]{};
\vertex[fill] (a15) at (0.25,-1) [label=below:$v_2$] {};
\vertex[fill] (a16) at (1.80,0) {};
\path [line width=1pt]
(a1) edge (a3)
(a1) edge (a15)
(a11) edge (a16)
(a11) edge (a9)
(a14) edge (a12)
(a14) edge (a7)
(a14) edge (a10)
(a5) edge (a6);
\end{tikzpicture}
\caption{A drawing of $ R_1,R_2 $ and $R_3$ shown in Figure~\ref{f6} such that the edges between them do not interrupt each other.}
\label{f7}
\end{figure}
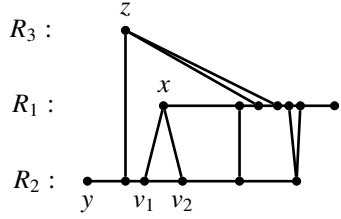
%%%%%%%%vs%%%%%%%%%%%%%%%%%%%%%%%%%%%%%%%%%%%%%%%%%%%%
}\end{proof}
Note that this lemma is stronger than Observation~\ref{p1} and Theorem~\ref{ftwo}. 
\section{Proof of Main Results}
This section is mainly concerned with proving the main results stated in Section~\ref{mainresults}. 
In the following proposition, we present an upper bound for the forcing number of a graph of $3$-parallel paths. 
\begin{proposition} \label{left} 
For any graph $ G $ of $3$-parallel paths, $ F(G) \leq 3$. In particular, 
the left-most vertices of any standard drawing of $G$ form a forcing set. 
\end{proposition}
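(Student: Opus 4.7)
The plan is to fix a standard drawing of $G$ with parallel paths $P^{i}=v_1^{i}v_2^{i}\cdots v_{n_i}^{i}$ for $i\in[3]$ (listed left to right), and to show that $F=\{v_1^{1},v_1^{2},v_1^{3}\}$ is a forcing set. I maintain the invariant that throughout the forcing process initiated by $F$, the set of colored vertices is always a \emph{prefix set} $\bigcup_{i=1}^{3}\{v_1^{i},\dots,v_{k_i}^{i}\}$ for some $k_i\in[n_i]$, and that as long as some $k_i<n_i$, at least one of the frontier vertices $f_i:=v_{k_i}^{i}$ can force $v_{k_i+1}^{i}$. Initially $k_1=k_2=k_3=1$, and each forced vertex is the next one on its own path, so the invariant persists; once $k_i=n_i$ for all $i$, every vertex has been colored, yielding $F(G)\leq 3$.

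The heart of the argument is to show that some frontier vertex can always force. Call $f_i$ (with $k_i<n_i$) \emph{blocked} if it has an uncolored segment-neighbor $v_m^{j}$ with $j\neq i$ and $m>k_j$; otherwise, the only uncolored neighbor of $f_i$ is $v_{k_i+1}^{i}$, which $f_i$ then forces. Assume for a contradiction that every $f_i$ with $k_i<n_i$ is blocked, and split into cases by the number of paths that still contain uncolored vertices. If only one path, say $P^{1}$, does, then every segment-neighbor of $f_1$ lies on the fully colored $P^{2}\cup P^{3}$ and $f_1$ is not blocked, a contradiction. If exactly two paths do, say $P^{1}$ and $P^{2}$, then the blocking segments from $f_1$ and $f_2$ must both connect $P^{1}$ and $P^{2}$ (since $P^{3}$ is fully colored), giving two segments whose endpoints appear in opposite orders on $P^{1}$ and $P^{2}$; such a pair necessarily crosses, contradicting the non-crossing property of segments in a standard drawing.

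The main obstacle is the case in which all three paths have uncolored vertices, since the three blocking segments from $f_1,f_2,f_3$ may connect three different pairs of paths and admit no direct pairwise crossing. If two of these segments still connect the same pair of paths, the previous pairwise argument applies; otherwise, the targets $(j_1,j_2,j_3)$ of the three blocking segments form a derangement of $(1,2,3)$, i.e., $(j_1,j_2,j_3)\in\{(2,3,1),(3,1,2)\}$. To handle this sub-case I invoke the three-path non-crossing property of standard drawings (the analog of Lemma~\ref{crosss}): after relabelling so that $(j_1,j_2,j_3)=(2,3,1)$, taking the blocking segments $f_1 v_{m_1}^{2}$ and $f_2 v_{m_2}^{3}$ in the roles of $(ab,cd)$ with $(i,j,k)=(1,2,3)$, the inequality $m_1>k_2$ is exactly the hypothesis $c<_{P^{j}}b$, so the conclusion forbids any edge between $P^{1}$ and $P^{3}$ with its $P^{1}$-endpoint strictly after $f_1$ and its $P^{3}$-endpoint strictly before $v_{m_2}^{3}$; yet the third blocking segment $f_3 v_{m_3}^{1}$ is precisely such an edge, because $m_3>k_1$ and $k_3<m_2$. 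This contradiction completes the induction and shows that $F$ is a forcing set.
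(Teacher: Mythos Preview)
Your proof is correct and takes a genuinely different route from the paper's. The paper argues by induction on $|V(G)|+|E(G)|$: it strips off leftmost vertices of degree~$1$ and edges inside $F$ until each of $x,y,z$ has degree at least~$2$ and no two are adjacent, and then extracts a single crossing contradiction from the geometry of the drawing. You instead run a direct greedy argument, maintaining the ``colored set $=$ union of prefixes'' invariant and showing that some frontier is always unblocked; your case split on the number of incomplete paths, together with the derangement sub-case, replaces the inductive reduction entirely. Your approach has the pleasant feature of exhibiting the forcing chains explicitly (each chain is a parallel path), which the paper only obtains implicitly. One point that deserves a line of justification: the ``three-path non-crossing property of standard drawings'' you invoke is not the same statement as Lemma~\ref{crosss} (that lemma is about chain sets and uses $k_F$, while Property~4 before Lemma~\ref{pro} is a \emph{hypothesis}, not a theorem about drawings). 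The geometric fact you need --- that the three blocking segments in the derangement configuration must contain a crossing pair regardless of which of $P^1,P^2,P^3$ lies in the middle row --- is true and easy to check by looking at where the long segment meets the middle line, but it is worth stating explicitly rather than citing Lemma~\ref{crosss} by analogy.
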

\begin{proof}{
Consider a standard drawing of a graph $G$ with parallel paths $Q_{1}, Q_{2}, Q_{3}$, 
where their indices are set according to their position in this drawing, i.e.,   for $i<j$, the path $Q_i$ is located above the path $Q_j$. 
Also, let $x,y,z$ be respectively the left-most vertices of $Q_{1}, Q_{2}, Q_{3}$. 
We claim that $F=\{x,y,z\}$ is a forcing set of $G$ which completes the proof.
We proceed the proof of this claim by induction on $|V(G)|+|E(G)|$. For $|V(G)|+|E(G)|=3$, we clearly have the assertion (in this case, $G$ has no edge). 
We assume that $|V(G)|+|E(G)|\geq 4$. If $Q_i$ is a path with one vertex for some $i\in[3]$, then it is~not difficult to see that $F$ is a forcing set of $G$. 
Therefore, we may suppose that each of $Q_1$, $Q_2$, and $Q_3$ has at least two vertices. 
If $\deg(v)=1$ for some $v\in\{x,y,z\}$, then, by induction, $F'=(F\setminus\{v\})\cup\{u\}$, where $u$ is the unique neighbor of $v$, 
is a forcing set of $G-v$ which implies that $F$ is a forcing set of $G$ as well. 
Henceforth, we assume that  $\deg(v)\geq 2$ for each $v\in\{x,y,z\}$. 
Now, we consider the case that $G[F]$, the induced subgraph on $F$, 
has at least edge $e$. It is clear that removing this edge cannot affect change the situation, i.e, $F$ is a forcing set of $G$ if and only if $F$ is a forcing set of $G-e$
But, in view of the induction, we know that $F$ is a forcing set of $G-e$. 
For a contradiction, assume that each of $x,y,$ and $z$ has the degree at least two and no two of them are adjacent.  
It is easy to see that either $x$ has a neighbor on $Q_3$ or $z$ has a neighbor on $Q_1$. 
By similarity, we may assume that $x$ has a neighbor $z'$ on $Q_3$. Note that $z$ is located in $Q_3$ at the left side of $z'$. 
Consider $Q_{1}, Q_{2}, Q_{3}$ as three parallel unbounded  lines in $\mathbb{R}^2$. Let $a$ be the intersection of the segment $xz'$ and $Q_2$. 
Clearly, $z$ has no neighbor on $Q_1$, otherwise, we have an interruption. 
Therefore, $z$ has a neighbor $y'$ in $Q_2$  located in the left side of $a$. Moreover, $y$ is in $Q_2$ in the left side of $y'$. 
Now, it is clear that $y$ has a neighbor neither in $Q_2$ nor $Q_1$, a contradiction. 
}\end{proof}
\begin{remark}\label{rem1}
With a similar approach, we can prove that for graphs of $k$-parallel paths, the zero forcing number is at most $k$, which is a result in favor of Conjecture~\ref{conj}.  
\end{remark}
\subsection{Proof of Theorem~\ref{iffkparallel}}\label{pt2} 
The previous proposition implies that a graph of $3$-parallel paths has a forcing set of size at most three. 
By the succeeding statement, we show that the forcing number of these graphs is three.  
\begingroup
\def\thetheorem{\ref{iffkparallel}}
\begin{theorem}
For a graph $ G $ with $ \Delta(G) \leq 3 $, $ F(G)=3 $ if and only if $ G $ is a graph of $3$-parallel paths.
In particular, the left-most vertices of parallel paths in any standard drawing of a graph of $3$-parallel paths form an $F(G)$-set. 
\end{theorem}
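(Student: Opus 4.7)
The plan is to prove the two directions separately, using Lemma~\ref{representation} and Proposition~\ref{left} as the main engines, and then invoking the earlier characterizations of graphs with forcing number one and two to pin down the size.

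For the easier direction ($\Leftarrow$), suppose $G$ is a graph of $3$-parallel paths. By Proposition~\ref{left}, the left-most vertices of the parallel paths in any standard drawing form a forcing set of size~$3$, so $F(G) \leq 3$. To rule out $F(G) \leq 2$, I would use the definition of a graph of $k$-parallel paths: since $G$ is a $3$-parallel graph, by definition $G$ is neither a path (a $1$-parallel graph) nor a graph of $2$-parallel paths. Now Observation~\ref{p1} says $F(G) = 1$ forces $G$ to be a path, and Theorem~\ref{ftwo} says $F(G) = 2$ forces $G$ to be a graph of $2$-parallel paths. Both possibilities are excluded, so $F(G) \geq 3$, and hence $F(G) = 3$. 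The ``In particular'' statement then follows, because the set of left-most vertices was already shown to be a forcing set, and it has size $3 = F(G)$, so it is an $F(G)$-set.

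For the harder direction ($\Rightarrow$), assume $\Delta(G) \leq 3$ and $F(G) = 3$. The heart of the argument is Lemma~\ref{representation}, which supplies an $F(G)$-set, a chain set $\mathcal{S} = \{R_1, R_2, R_3\}$, and a standard drawing of $G$ whose parallel paths are exactly $R_1, R_2, R_3$. This immediately shows that $G$ admits a standard drawing with three vertex-disjoint induced paths covering all vertices, satisfying the non-crossing condition on segments. To conclude that $G$ is a graph of $3$-parallel paths rather than of fewer parallel paths, I would again invoke Observation~\ref{p1} and Theorem~\ref{ftwo}: if $G$ were a path, then $F(G) = 1 \neq 3$, and if $G$ were a graph of $2$-parallel paths, then $F(G) = 2 \neq 3$, both contradictions. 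Hence $G$ is genuinely a graph of $3$-parallel paths.

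The main obstacle in this proof is already absorbed by the earlier machinery: Lemma~\ref{representation} is what actually produces a standard drawing from a minimum forcing set, and its proof required the whole sequence of reductions (Lemmas~\ref{nobadlemma},~\ref{cons},~\ref{unless}, and~\ref{pro}). So at this stage the work reduces to assembling the pieces. The only subtle point to be careful about is that Lemma~\ref{representation} gives \emph{some} standard drawing whose parallel paths are the chains of a minimum forcing set, but the theorem's ``In particular'' part claims that in \emph{any} standard drawing the left-most vertices form an $F(G)$-set. This follows directly from Proposition~\ref{left}, which applies to every standard drawing, yielding a forcing set of size $3 = F(G)$, hence an $F(G)$-set.
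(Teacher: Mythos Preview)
Your proof is correct and follows essentially the same route as the paper: both directions are handled by combining Proposition~\ref{left} and Lemma~\ref{representation} with Observation~\ref{p1} and Theorem~\ref{ftwo} to exclude the cases $F(G)\le 2$ (respectively, being a graph of $k$-parallel paths for $k<3$). Your explicit remark that Proposition~\ref{left} applies to \emph{every} standard drawing, and hence yields the ``in particular'' clause, matches the paper's concluding sentence.
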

\addtocounter{theorem}{-1}
\endgroup
\begin{proof}{
Let $ G $ be a graph of $3$-parallel paths with $ \Delta(G) \leq 3 $. By Proposition \ref{left}, we know $ F(G) \leq 3 $.
If $ F(G) = k < 3 $ then,  by Observation \ref{p1} and Theorem \ref{ftwo}, $ G $ is either a path or a graph of $2$-parallel paths which is impossible. 
Conversely, let $ G $ be a graph with  $ F(G)=3 $ and $ \Delta(G) \leq 3 $. By Lemma \ref{representation}, there is a standard drawing of $G$ 
whose parallel paths are the chains of a chain set corresponding to an $ F(G) $-set. So $G$ is a graph of $k$-parallel paths, 
where $ k \in [3] $. If $G$ is a graph of $k$-parallel paths for some 
$k < 3$ then, by Observation~\ref{p1} and Theorem~\ref{ftwo}, $ F(G) = k < 3 $ which is a contradiction.  
Therefore, $G$ is a graph of $3$-parallel paths. Now, by 
Proposition~\ref{left}, the left-most vertices of the parallel paths in any standard drawing of $G$ form an $F(G)$-set.
}\end{proof}
Although we use Observation \ref{p1} and Theorem \ref{ftwo} to prove that $F(G)\neq 1,2$, 
we can avoid of using these observation and theorem and use Lemma~\ref{representation}. 

\subsection{Proof of Theorem~\ref{fmk}}\label{pt3}
In this section, we present a proof of Theorem~\ref{fmk}. Recall its statement.

\begingroup
\def\thetheorem{\ref{fmk}}
\begin{theorem}
For a graph $G$ with $\Delta(G) \leq 3$, the following assertions hold. 
\begin{enumerate}
\item $F(G)=M(G)=1$ if and only if $G$ is a path. 
\item $F(G)=M(G)=2$ if and only if $G$ is a graph of $2$-parallel paths. 
\item $F(G)=M(G)+1=3$ if and only if $G$ is a graph of type defined in Figure~\ref{F8}. 
\item $F(G)=M(G)=3$ if and only if $G$ is a graph of $3$-parallel paths and not of type defined in Figure~\ref{F8}.
\item There is no graph with $F(G)=M(G)+1=2$ or $F(G)=M(G)+2=3$.
\end{enumerate}
\end{theorem}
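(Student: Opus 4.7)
The plan is to reduce each of the five statements to results already assembled in the paper: Observation~\ref{p1}, Theorem~\ref{M=0}, Proposition~\ref{mg}, Theorem~\ref{ftwo}, Theorem~\ref{mtwo}, and the newly proved Theorem~\ref{iffkparallel}. Parts (1), (2), and (5) are essentially bookkeeping. For (1), Observation~\ref{p1} gives $F(G)=1 \iff G$ is a path and Theorem~\ref{M=0} gives $M(G)=1 \iff G$ is a path, so the two conditions coincide. For (2), Theorem~\ref{ftwo} gives $F(G)=2 \iff G$ is a graph of $2$-parallel paths, and Theorem~\ref{mtwo} says every graph of $2$-parallel paths has $M(G)=2$; together with Proposition~\ref{mg} this yields the equivalence. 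For (5), if $M(G)=1$ then Theorem~\ref{M=0} forces $G$ to be a path and then $F(G)=1$ by Observation~\ref{p1}, ruling out both $F(G)-M(G)\geq 1$ scenarios in the statement.

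The substantive work is in Part~(3). In the forward direction, assume $F(G)=3$ and $M(G)=2$. Theorem~\ref{iffkparallel} says $G$ is a graph of $3$-parallel paths, and by the definition of $k$-parallel paths this precisely means $G$ is not a graph of $2$-parallel paths. Theorem~\ref{mtwo} then forces $G \in \mathcal{F}$, where $\mathcal{F}$ is the six-family list of Table~B1 in \cite{jls}. The required identification is that the subfamily of $\mathcal{F}$ consisting of graphs simultaneously satisfying $\Delta(G)\leq 3$ and the $3$-parallel-path property is exactly the family pictured in Figure~\ref{F8}. This is verified by inspecting each of the six types in $\mathcal{F}$, discarding those with a vertex of degree $\geq 4$, and, for the remaining types, checking whether they admit a standard drawing on three parallel paths without segment interruption. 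The backward direction is direct: any graph in Figure~\ref{F8} is by inspection $3$-parallel paths with $\Delta\leq 3$, so by Theorem~\ref{iffkparallel} it has $F(G)=3$, and it lies in $\mathcal{F}$, so by Theorem~\ref{mtwo} it has $M(G)=2$.

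Part~(4) is then an immediate bookkeeping consequence of Parts (3) and (5) combined with Theorem~\ref{iffkparallel} and Proposition~\ref{mg}. Indeed, if $G$ is a graph of $3$-parallel paths not appearing in Figure~\ref{F8}, then Theorem~\ref{iffkparallel} gives $F(G)=3$ and Proposition~\ref{mg} gives $M(G)\in\{1,2,3\}$; the value $M(G)=1$ is excluded by Theorem~\ref{M=0} (it would force $G$ to be a path, contradicting $F(G)=3$), and $M(G)=2$ is excluded by Part~(3) (it would place $G$ in Figure~\ref{F8}). The converse is just the contrapositive of the forward direction of Part~(3), together with Theorem~\ref{iffkparallel}.

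The principal obstacle is clearly the case analysis embedded in Part~(3): writing out each of the six types in $\mathcal{F}$ from Table~B1 of \cite{jls}, isolating those with maximum degree at most three, and checking realisability as a $3$-parallel-paths graph. Once that catalogue is pinned down and seen to coincide with the family of Figure~\ref{F8}, the remaining parts of the theorem fall out essentially for free.
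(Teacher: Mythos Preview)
Your proposal is correct and follows essentially the same route as the paper's own proof: reducing everything to Observation~\ref{p1}, Theorems~\ref{M=0},~\ref{mtwo},~\ref{ftwo},~\ref{iffkparallel}, and Proposition~\ref{mg}, with Part~(3) hinging on the inspection that the only members of $\mathcal{F}$ with $\Delta\leq 3$ are the graphs of Figure~\ref{F8}. The only cosmetic differences are that the paper cites Theorem~\ref{mf} rather than Theorem~\ref{M=0} for Part~(1), and that your extra filter ``and is a graph of $3$-parallel paths'' in the Part~(3) case analysis is redundant once one knows that $\Delta\leq 3$ already singles out the Figure~\ref{F8} type inside $\mathcal{F}$.
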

\addtocounter{theorem}{-1}
\endgroup
\begin{proof}{
The first assertion is an immediate consequence of Proposition~\ref{p1} and Theorem~\ref{mf}. 
Using Theorems~\ref{mtwo} and~\ref{ftwo}, we would have the second assertion. 
To prove the third item, assume that $F(G)=M(G)+1=3$. Since $F(G)=3$, by Theorem~\ref{iffkparallel}, $G$ is a graph of $3$-parallel paths. 
Therefore, in view of Theorem~\ref{mtwo},  $G$ is a graph in $\mathcal{F}$. The family $\mathcal{F}$ contains $6$ types of graphs.
Since $\Delta(G)\leq 3$, the only possible type is the one described in Figure~\ref{F8}. 
Conversely, if $G$ is of the type defined in Figure~\ref{F8}, then $G$ is in  $\mathcal{F}$ and, by Theorem~\ref{mtwo},  $M(G)=2$.
On the other hand, $G$ is a graph of $3$-parallel paths and so, by Theorem~\ref{iffkparallel}, $F(G)=3$, completing the proof of third assertion.  

As an immediate consequence of Theorem~\ref{iffkparallel}, $F(G)=M(G)=3$ is sufficient for being $3$-parallel paths. 
Also, by Theorem~\ref{mtwo}, $G$ is~not of type defined in Figure~\ref{F8}. 
Conversely, if $G$ is a graph of $3$-parallel paths and not of types defined in Figure~\ref{F8}, then $1\leq M(G)\leq F(G)\leq 3$ by Propositions~\ref{mg} and~\ref{left};  
and $M(G)\neq 1,2$ using Theorem~\ref{mtwo} and Theorem~\ref{M=0} completing the proof of forth item. 

To prove the last item, note that when $F(G)=2$, $G$ is a graph of  $2$-parallel paths (Theorem~\ref{ftwo}) and hence, by Theorem~\ref{mtwo}, $M(G)=2$. 
Also, when $F(G)=3$, $G$ is a graph of $3$-parallel paths (Theorem~\ref{iffkparallel}). But, we know that $M(G)=1$ if and only if $G$ is a path, see Theorem~\ref{M=0}.
}\end{proof}

\subsection{Proof of Corollary~\ref{ftn}}\label{pt4} 
In this section, we present the proof of Theorem 6. Recall its statement.

\begingroup
\def\thetheorem{\ref{ftn}}
\begin{corollary}
For a graph $G$ of $k$-parallel paths and without isolated vertices, $F_{t}(G) \leq 2k$ and this bound is sharp.
\end{corollary}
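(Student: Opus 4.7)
The plan is to read off the total forcing set from the leftmost vertices of the parallel paths, repairing isolated ones by borrowing a neighbor, and to witness tightness with disjoint edges.

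Fix a standard drawing of $G$ with parallel paths $P^{1},\ldots,P^{k}$, and let $L=\{v_{1},\ldots,v_{k}\}$ denote their leftmost vertices. By Remark~\ref{rem1} (the straightforward extension of Proposition~\ref{left} to $k$-parallel paths), the set $L$ is a forcing set of $G$, and since enlarging a forcing set keeps it a forcing set, it suffices to augment $L$ to a set of size at most $2k$ whose induced subgraph has no isolated vertex.

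To do so, initialize $F_{t}:=L$, and for every $i\in[k]$ such that $v_{i}$ is isolated in $G[L]$, add one arbitrary neighbor $w_{i}$ of $v_{i}$; such a neighbor exists because $G$ has no isolated vertex. Typically $w_{i}$ can be taken to be the second leftmost vertex of $P^{i}$ when $|V(P^{i})|\geq 2$, but if $P^{i}$ is a singleton we must borrow $w_{i}$ from some other parallel path. At most $k$ vertices are added, so $|F_{t}|\leq 2k$. To check that $G[F_{t}]$ has no isolated vertex, note that each $v_{i}$ is either already adjacent in $G$ to some $v_{j}\in L$, or was explicitly paired with the added neighbor $w_{i}$; and each added $w_{i}$ is adjacent to $v_{i}\in F_{t}$ by construction. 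Thus $F_{t}$ is a total forcing set of cardinality at most $2k$.

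For sharpness, take $G=kK_{2}$, the disjoint union of $k$ edges. It is a graph of $k$-parallel paths (each $K_{2}$ serves as one parallel path) and cannot be a graph of fewer parallel paths since it has $k$ connected components and each path is connected. Any total forcing set must contain both endpoints of every $K_{2}$-component, because a single endpoint would be isolated in the induced subgraph; hence $F_{t}(kK_{2})=2k$, matching the bound.

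The only substantive ingredient is Remark~\ref{rem1}; once it is granted, the construction above is essentially bookkeeping. The mildly delicate point is the singleton-path case, where the added neighbor must be sought outside $P^{i}$, but this does not affect the cardinality bound or the no-isolated-vertex property, so no additional obstacle arises.
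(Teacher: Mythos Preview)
Your proof is correct and follows essentially the same approach as the paper: start with the leftmost vertices of the parallel paths as a forcing set (the paper cites Theorem~\ref{iffkparallel} though Remark~\ref{rem1} is the right reference for general $k$, as you use), augment each by one neighbor to destroy isolates, and witness sharpness with $k$ disjoint paths. Your version is slightly more economical in that you only add a neighbor when $v_i$ is actually isolated in $G[L]$, but the bound and the argument are the same.
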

\addtocounter{theorem}{-1}
\endgroup 
\begin{proof}{ 
Consider a standard drawing of $G$. 
By Theorem \ref{iffkparallel}, the left-most vertices of the parallel paths form a forcing set. It is clear that the left-most two vertices of the 
parallel paths form an $TF$-set. In the parallel paths isomorphic to $P_1$, 
we consider the only vertex of these paths with one of the vertices adjacent to it. 
So $F_{t}(G) \leq 2k$.
Now we show this bound is sharp. Let $ G $ be a graph which is the union of $k$ vertex disjoint paths so that the size of paths is at least two. 
It is clear that $G$ is a graph of $k$-parallel paths and any $TF$-set must contain at least two vertices from each path, which implies $F_{t}(G) \geq 2k$. 
As observed earlier $ F_{t}(G) \leq 2k $.  So, $ F_{t}(G) = 2k $.
}\end{proof} 

It is known that for a graph $G$ with no isolated vertex (see~\cite{db,dho}), $F_t(G)/F(G) \leq 2$. 
The previous corollary concludes that this bound is sharp. To see this, set $G$ be a graph of $k$ independent 
induced paths so that the size of paths is at least two and whose vertices are not adjacent to each other. It is clear that 
$G$ is a graph of $ k $- parallel paths, then $ F(G)=k $ and $ F_{t}(G)=2k $.

%%%%%%%%%%%%%%%%%%%%%%%%%%%%%%%%%%%%%%%%%%%%%%%%%%%%%%%%%%%
%%%%%%%%%%%%%%%%%%%%%%%%%%%%%%%%%%%%%%%%%%%%%%%%%%%%%%%%%%%

%%%%%%%%%%%%%%%%%%%%%%%%%%%%%%%%%%%%%%%%%%%%%%%%%%%%%%%%%%%
%\bibliographystyle{plain}
%\bibliography{refrence}
\end{document}